\newtheorem{Theorem}{Theorem}
\newtheorem{Lemma}[Theorem]{Lemma}
\newtheorem{Proposition}[Theorem]{Proposition}\newtheorem{Claim}[Theorem]{Claim}
\newtheorem{Definition}[Theorem]{Definition}
\newtheorem{Remark}[Theorem]{Remark}
\newtheorem{Example}[Theorem]{Example}
\newtheorem{Problem}[Theorem]{Problem}
\newtheorem{Assertion}[Theorem]{Assertion}
\newcommand{\eps}{\varepsilon}
\newcommand\la{\lambda}
\newcommand\vphi{\varphi}
\newcommand\al{\alpha}
\newcommand\si{\sigma}
\newcommand\be{\beta}
\newcommand\ga{\gamma}
\newcommand\de{\delta}
\newcommand\De{\Delta}
\newcommand\BC{ {\mathbb C}}
\newcommand\BN{ {\mathbb  N}}
\newcommand\BR{ {\mathbb  R}}
\newcommand\BP{ {\mathbb  P}}
\newcommand\bfZ{\mbox {\bf  Z}}
\newcommand\bfu{\mbox {\bf  u}}
\newcommand\bfv{\mbox {\bf  v}}
\newcommand\bfx{\mbox {\bf  x}}
\newcommand\bfX{\mbox {\bf  X}}
\newcommand\bfw{\mbox {\bf  w}}
\newcommand\bfz{\mbox {\bf  z}}
\newcommand\bfa{\mbox {\bf  a}}
\newcommand\nin{\noindent}
\newcommand\nl{\newline}
\newcommand\grad{{\rm{grad}\/}}
\newcommand\tgrad{{{}^t\rm{grad}\/}}
\newcommand\id{\rm{id}}
\newcommand\Sign{{\rm{Sign}}}
\newcommand \sm{\rm{m_s}\/}
\newcommand \SM{\rm{m_{s,tot}}\/}
\newcommand\inv{^{-1}}
\def\mapright#1{\smash{\mathop{\longrightarrow}\limits^{{#1}}}}
\def\mapdown#1{\Big\downarrow\rlap{$\vcenter{\hbox{$#1$}}$}}
\def\inv{^{-1}}
\begin{document}
\title[ Intersection Theory on Mixed Curves
]
{ Intersection Theory on Mixed Curves
}

\author
[M. Oka ]
{Mutsuo Oka }
\address{\vtop{
\hbox{Department of Mathematics}
\hbox{Tokyo  University of Science}
\hbox{1-3 Kagurazaka, Shinjuku-ku}
\hbox{Tokyo 162-8601}
\hbox{\rm{E-mail}: {\rm oka@rs.kagu.tus.ac.jp}}
}}
\keywords {Mixed curves, intersection number, multiplicity with sign}
\subjclass[2000]{14J17, 14N99}

\begin{abstract}
We consider two mixed curve  $C,C'\subset \BC^2$ which are defined by 
mixed functions of two variables $\bfz=(z_1,z_2)$.
We have shown in  \cite{MC},  that they have canonical orientations.
 If $C$ and $C'$ are smooth and intersect transversely at $P$,
the  
 intersection number  $I_{top}(C,C';P)$ is  topologically defined.
We will generalize this definition to the case when
 the intersection is not  necessarily transversal or either  $C$ or $C'$ may be singular at $P$
using the defining mixed polynomials.
\end{abstract}
\maketitle

\maketitle

\section{Introduction}
First we recall the complex analytic situation.
Consider  complex polynomials $f(\bfz)$ and $g(\bfz)$
of two variables $\bfz=(z_1,z_2)$ and consider
 complex analytic  curves defined by 
$C: f(\bfz)=0$ and $C': g(\bfz)=0$.
Suppose that 
$P$ is an isolated  intersection point of $C\cap C'$.
 Then the local algebraic intersection number
$I(f,g;P)$ is defined by the dimension of the quotient module
$\dim\,\mathcal O_P/(f,g)$ where $\mathcal O_P$ is the local ring of the
holomorphic functions at $P$
and $(f,g)$ is the ideal generated by $f$ and $g$.
Thus  $I(f,g;P)$  a strictly positive integer
 and it is equal to 1 if and only if 
$C$ and $C'$ are non-singular at $P$ and transversal each other.
 On the other hand, the complex curves $C,C'$ have  canonical orientations 
which come from their complex structures  
(see for example, \cite{GriffithsEtc}) and   the local algebraic
intersection number is equal to   the local topological intersection
number
if the intersection is transverse. Moreover this is also true for
non-transverse intersection in the sense that 
under a slight perturbation,
an intersection $P$ of algebraic intersection number $\nu$ splits
into $\nu$ transverse intersections.
In particular, the topological local  intersection number
can  be defined by the algebraic local intersection number. 

The purpose of this note is to define  the  local
intersection multiplicity for  two mixed curves using the defining polynomials
and study the analogues properties.
The problem in this case is that the local intersection number is not necessarily positive.
This makes the algebraic calculation more difficult.
Let $C: \,f(\bfz,\bar \bfz)=0$ and $C':\,g(\bfz,\bar\bfz)=0$
be mixed curves which have at worst isolated mixed singularity at $P\in
C\cap C'$. We will define the intersection multiplicity
$I_{top}(C,C';P)$
using a certain mapping degree which is described by the defining polynomials $f,g$
(Definition \ref{local intersection}, \S 2 and Theorem \ref{Theorem}). 
This definition coincides with the usual one for   complex analytic curves.

In \S 4,
we consider the roots of  a mixed polynomial $h(u,\bar u)$ of one variable $u$ as a special case.
We introduce the notion of {\em multiplicity with sign}  $\sm(f,\al)$ for a root $\al$ of $h(u,\bar u)=0$
and  we give  a formulae for the description of $\sm(f,\al)$ for  an
admissible mixed  polynomial $h(u,\bar u)$
(Theorem \ref{main}).

\section{Mixed curves}
\subsection{A mixed singular point}
Let $f(\bfz,\bar\bfz),\,\bfz=(z_1,z_2)\in \BC^2$, be a mixed polynomial.
See \cite{OkaMix, MC} for further details about   a mixed polynomial.
Using real coordinates $(x_1,y_1,x_2,y_2)$ with $z_j=x_j+iy_j,\,j=1,2$,
$f$ can be understood as
 a sum of two polynomials with real coefficients:
\[
 f(\bfz,\bar\bfz)= f_{\BR}(x_1,y_1,x_2,y_2)+ i f_I(x_1,y_1,x_2, y_2).
\]
where $f_{\BR},f_I$ are  the real part and the  imaginary part of $f$
respectively.
Recall that $f(\bfz,\bar \bfz)$ is a polynomial of 
$x_1,y_1,x_2,y_2$ by the substitution
\[
 z_j=\frac{z_j+\bar z_j}2,\quad
\bar z_j=\frac{z_j-\bar z_j}{2i},\quad j=1,2.
\]
We say that   $C: f(\bfz,\bar\bfz)=0$ is  {\em  mixed non-singular at 
$P\in C$ }
if the Jacobian matrix of $(f_{\BR}, f_I)$ has rank two at $P$
(\cite{OkaPolar, OkaMix}).
We recall that $\BC^2$ has a canonical orientation given from the complex structure.
We identify $\BC^2$ with $\BR^4$ by $(z_1,z_2)\leftrightarrow (x_1,y_1,x_2,y_2)$ and thus  a positive frame of $\BR^4$ is  given by $(\frac{\partial}{\partial x_1},\frac{\partial}{\partial y_1},\frac{\partial}{\partial x_2},\frac{\partial}{\partial y_2})$.
If $P$ is a mixed non-singular point, $C$ is locally
a real two dimensional manifold.
The normal bundle $N_{C,P}$ of $C\subset \BC^2$ at $P$ has the canonical orientation 
so that the orientation is compatible with the complex valued function
$f$, namely $df_P:N_{C,P}\to T_{0}\BC$ is an orientation preserving isomorphism.
Thus the orientation of $C$ at $P$ is defined as follows.  A frame $(\bfv_1,\bfv_2)\subset T_P C$,
$\bfv_1=(v_{11},v_{12},v_{13},v_{14}),\bfv_2=(v_{21},v_{22},v_{23},v_{24})$, is positive  if and only if
the frame
\[
 M:=\left(\begin{matrix}\bf\bfv_1\\\bf\bfv_2\\{\grad}\,f_{\BR}\\{\grad}\,f_I\end{matrix}\right)=\left(
\begin{matrix}
v_{11}&v_{12}&v_{13}&v_{14}\\
v_{21}&v_{22}&v_{23}&v_{24}\\
\frac{\partial f_{\BR}}{\partial x_1}&\frac{\partial f_{\BR}}{\partial y_1}&
\frac{\partial f_{\BR}}{\partial x_2}&\frac{\partial f_{\BR}}{\partial y_2}\\
\frac{\partial f_I}{\partial x_1}&
\frac{\partial f_I}{\partial y_1}&
\frac{\partial f_I}{\partial x_2}&
\frac{\partial f_I}{\partial y_2}\\
\end{matrix}
\right)
\]
is a positive frame of $\BC^2=\BR^4$. 
 The  
gradient
vector ${\grad}\, h(x_1,y_1,x_2,y_2)$ of a real valued function $h$ 
is defined by 
\[
{\grad}\, h(x_1,y_1,x_2,y_2)= (\frac{\partial h}{\partial x_1},\frac{\partial h}{\partial y_1},\frac{\partial h}{\partial x_2},\frac{\partial
h}{\partial y_2}).
\]


\subsection{Mixed homogenization and the closure in $\BP^2$} Assume that $f(\bfz,\bar\bfz)=\sum_{\nu,\mu}c_{\nu\mu} \bfz^{\nu}{\bar\bfz}^\mu$ is a
mixed polynomial
of two variables $\bfz=(z_1,z_2)$.
Put $C=f\inv(0)\subset \BC^2$.
We assume that $C$ is non-empty  and that  $C$ has only finite number of mixed
singular points.
 We consider the affine space $\BC^2$ 
with coordinates $\bfz$ as  the affine chart $Z_0\ne 0$ of the
projective space $\BP^2$ with homogeneous coordinates $(Z_0,Z_1,Z_2)$.
The coordinates are related by 
$z_1=Z_1/Z_0,z_2=Z_2/Z_0$.
Let $d^+$ and $d^-$ be the degree of $f(\bfz,\bar \bfz)$ in $\bfz$ and
$\bar \bfz$ respectively.
That is,
\[
 d^+=\max\{|\nu|\,|\, c_{\nu\mu}\ne 0\},\,\,d^-=\max\{|\mu|\,|\, c_{\nu\mu}\ne 0\}
\]
where  $|\nu|=\nu_1+\nu_2  $ for a multi-integer $\nu=(\nu_1,\nu_2  )$.
We assocate with $f$ a strongly  polar homogeneous mixed polynomial $F(\bfZ,\bar\bfZ)$ as follows, where 
$\bfZ=(Z_0,Z_1,Z_2)$
and $\bar \bfZ=(\bar Z_0,\bar Z_1,\bar Z_2)$ by
$ F(\bfZ,\bar \bfZ):=Z_0^{d_+}{\bar Z_0}^{d^-}f(\frac{Z_1}{Z_0},\frac{Z_2}{Z_0},
 \frac{\bar Z_1}{\bar Z_0},\frac{\bar Z_2}{\bar Z_0})$
and we call $F$ {\em the mixed homogenization of $f$}.
We define  $\bar C\subset \BP^2$ by the topological closure
of $C\subset \BC^2\subset \BP^2$ and 
we defines a mixed  projective curve $\tilde C:=\{((Z_o:Z_1:Z_2)\in \BP^2\,|\,F(\bfZ,\bar\bfZ)=0\}$.
It is easy to see that 
the closure  
$\bar C$ of $C$ in $\BP^2$ is a subset of $\tilde C$ but in general, $\bar C$ might be a proper  subvariety of $\tilde C$.
 $F$ is a strongly polar homogeneous polynomial of radial degree
$d^++d^-$
and the polar degree $d^+-d^-$respectively
and $F|_{Z_0\ne 0}=f$.  In  \cite{MC}, we have assumed that  the  polar degree
is non-zero for the definition of strongly
polar homogeneous polynomials, but in this paper, we consider also the case $d^+=d^-$.

\section{Intersection numbers}
\subsection{Local intersection number I (Smooth and transversal intersection case)}
In this section, we denote vectors in $\BR^4$ by {\bf column vectors} for  brevity's  sake.
 Assume that 
$C:f=0$ and $C':g=0$ are two mixed curves and assume that 
$P\in C\cap C'$ and $C,C'$ are mixed non-singular at $P$ and  the intersection  is transverse at
$P$.
Let $\bfu_1,\bfu_2 $ and $\bfv_1,\bfv_2$ be positive frames of $T_P C$ and $T_P C'$.
Then the local (topological)
 intersection number $I_{top}(C,C';P)$ is  defined by the sign of the determinant
$\det(\bfu_1,\bfu_2 ,\bfv_1,\bfv_2)$ (See for example \cite{Milnor-h-cobordism}).  Namely
\[
I_{top}(C,C';P)=\begin{cases}
1,&\quad  \det(\bfu_1,\bfu_2 ,\bfv_1,\bfv_2)>0,\\
-1,&\quad \det(\bfu_1,\bfu_2 ,\bfv_1,\bfv_2)<0.
\end{cases}
\] 
For any frames  $\bfw_1,\dots,\bfw_4$ of $\BR^4$, we define 
\[
\Sign (\bfw_1,\bfw_2,\bfw_3,\bfw_4):=\begin{cases}
1,\quad &\text{if}\,\,\det(\bfw_1,\bfw_2,\bfw_3,\bfw_4)>0\\
-1,\quad &\text{if}\,\,\det(\bfw_1,\bfw_2,\bfw_3,\bfw_4)<0.
\end{cases}
\]
By the definition of the orientation of $C$ and $C'$,
\[
\begin{split}
& \Sign (\bfu_1,\bfu_2 ,{\tgrad}\,f_{\BR}(P),{\tgrad}\,f_I(P))=1,\\
&
\Sign (\bfv_1,\bfv_2,{\tgrad}\, g_{\BR}(P),{\tgrad}\, g_I(P))=1.
\end{split}
\]
Now our first result is the following.
\begin{Theorem}\label{Theorem}
The intersection number $ I_{top}(C,C';P)$ is given by 
\[
 \Sign  ({\tgrad}\,f_{\BR}(P),{\tgrad}\,f_I(P),{\tgrad}\,
 g_{\BR}(P),{\tgrad}\, g_I(P)).
\]
 \end{Theorem}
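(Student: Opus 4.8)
The plan is to reduce the statement to a purely linear-algebra computation about the sign of a $4\times 4$ determinant, using the two orientation conditions recalled just before the theorem. Write $\bfn_1 = {\tgrad}\,f_{\BR}(P)$, $\bfn_2 = {\tgrad}\,f_I(P)$, $\bfm_1 = {\tgrad}\,g_{\BR}(P)$, $\bfm_2 = {\tgrad}\,g_I(P)$, and let $\bfu_1,\bfu_2$ and $\bfv_1,\bfv_2$ be positive frames of $T_PC$ and $T_PC'$. Since $C,C'$ are mixed non-singular at $P$ and the intersection is transverse, the four vectors $\bfu_1,\bfu_2,\bfv_1,\bfv_2$ form a basis of $\BR^4$, and so do $\bfn_1,\bfn_2,\bfm_1,\bfm_2$ (because $\bfn_1,\bfn_2$ span the normal line $N_{C,P}$, $\bfm_1,\bfm_2$ span $N_{C',P}$, and transversality $T_PC + T_PC' = \BR^4$ is equivalent to $N_{C,P}\cap N_{C',P} = 0$). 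The theorem asserts $\Sign(\bfu_1,\bfu_2,\bfv_1,\bfv_2) = \Sign(\bfn_1,\bfn_2,\bfm_1,\bfm_2)$.

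The key step is a change-of-basis argument. Express everything in the basis adapted to the splitting $\BR^4 = T_PC \oplus N_{C,P}$: in this basis $\bfu_1,\bfu_2$ sit in the first factor and $\bfn_1,\bfn_2$ in the second, while $\bfv_1,\bfv_2$ have components in both. Writing $\bfv_j = \bfa_j + \bfb_j$ with $\bfa_j\in T_PC$, $\bfb_j\in N_{C,P}$, transversality forces $\bfb_1,\bfb_2$ to be linearly independent in $N_{C,P}$. A column-operation computation then shows
\[
\det(\bfu_1,\bfu_2,\bfv_1,\bfv_2) = \det(\bfu_1,\bfu_2,\bfb_1,\bfb_2),
\]
since subtracting suitable multiples of $\bfu_1,\bfu_2$ from $\bfv_1,\bfv_2$ kills the $\bfa_j$-parts without changing the determinant. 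On the other hand $\det(\bfu_1,\bfu_2,\bfn_1,\bfn_2) > 0$ by the first orientation condition, so $(\bfn_1,\bfn_2)$ and $(\bfb_1,\bfb_2)$ are two bases of the $2$-plane $N_{C,P}$; hence $\det(\bfu_1,\bfu_2,\bfb_1,\bfb_2)$ and $\det(\bfu_1,\bfu_2,\bfn_1,\bfn_2)$ have the same sign if and only if $(\bfb_1,\bfb_2)$ and $(\bfn_1,\bfn_2)$ induce the same orientation of $N_{C,P}$. Thus
\[
\Sign(\bfu_1,\bfu_2,\bfv_1,\bfv_2) = \Sign(\bfu_1,\bfu_2,\bfn_1,\bfn_2)\cdot\eps = \eps,
\]
where $\eps = \pm1$ records whether $(\bfb_1,\bfb_2)$ and $(\bfn_1,\bfn_2)$ agree as oriented bases of $N_{C,P}$.

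It remains to identify $\eps$ with $\Sign(\bfn_1,\bfn_2,\bfm_1,\bfm_2)$. For this I apply the same reduction to the \emph{second} orientation condition: since $\bfv_1,\bfv_2$ is a positive frame of $T_PC'$ and $N_{C',P}$ is spanned by $\bfm_1,\bfm_2$ with $\det(\bfv_1,\bfv_2,\bfm_1,\bfm_2)>0$, projecting along $T_PC'$ onto $N_{C',P}$ and running the analogous column-operation argument expresses $\Sign(\bfn_1,\bfn_2,\bfm_1,\bfm_2)$ in terms of how $(\bfn_1,\bfn_2)$ projects into $N_{C',P}$ relative to $(\bfm_1,\bfm_2)$. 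Comparing the two projections — one of $(\bfb_1,\bfb_2)$ versus $(\bfn_1,\bfn_2)$ in $N_{C,P}$, the other of $(\bfn_1,\bfn_2)$ versus $(\bfm_1,\bfm_2)$ along $T_PC'$ — and using that the decomposition $\bfv_j = \bfa_j + \bfb_j$ is precisely the one governing both, one gets $\eps = \Sign(\bfn_1,\bfn_2,\bfm_1,\bfm_2)$, completing the proof. The main obstacle I expect is bookkeeping the signs through these two successive projections consistently; the cleanest route is probably to avoid choosing frames at all and instead phrase the whole argument as an identity of orientations on the exact sequences $0\to T_PC\to \BR^4\to N_{C,P}\to 0$ and $0\to T_PC'\to\BR^4\to N_{C',P}\to 0$, so that the two orientation hypotheses become statements that certain induced orientations agree, and the theorem is then the multiplicativity of orientations in a commutative diagram of short exact sequences.
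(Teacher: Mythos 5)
Your first half is sound, and it is a genuinely different route from the paper's: the column-operation reduction $\det(\bfu_1,\bfu_2,\bfv_1,\bfv_2)=\det(\bfu_1,\bfu_2,\bfb_1,\bfb_2)$ and the conclusion $\Sign(\bfu_1,\bfu_2,\bfv_1,\bfv_2)=\sign\det M$, where $M$ is the matrix of the projection $\pi_{N}|_{T_PC'}\colon T_PC'\to N_{C,P}$ in the bases $(\bfv_1,\bfv_2)$ and $(\bfn_1,\bfn_2)$, are both correct. The gap is in your last step, which is where the actual content of the theorem sits. Running the analogous column operations on $\det(\bfn_1,\bfn_2,\bfm_1,\bfm_2)$ (you can only subtract multiples of the columns $\bfm_1,\bfm_2$, i.e.\ kill the $N_{C',P}$-components of the $\bfn_j$) yields $\Sign(\bfn_1,\bfn_2,\bfm_1,\bfm_2)=\sign\det M'$, where $M'$ is the matrix of the \emph{other} projection $\pi_{T_PC'}|_{N_{C,P}}\colon N_{C,P}\to T_PC'$ in the bases $(\bfn_1,\bfn_2)$ and $(\bfv_1,\bfv_2)$. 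Your phrase ``the decomposition $\bfv_j=\bfa_j+\bfb_j$ is precisely the one governing both'' is not an argument: the two projections belong to different splittings ($\BR^4=T_PC\oplus N_{C,P}$ versus $\BR^4=T_PC'\oplus N_{C',P}$), and what you actually need is $\sign\det M=\sign\det M'$. This is true but requires a reason: either note that $\pi_N|_{T_PC'}$ and $\pi_{T_PC'}|_{N_{C,P}}$ are mutually adjoint for the Euclidean inner product (so $M'=G_{T'}^{-1}\,{}^tM\,G_N$ with positive-definite Gram matrices $G_{T'},G_N$), or, more directly, that the composite $\pi_{T_PC'}\circ\pi_{N}|_{T_PC'}$ is positive definite on $T_PC'$, since $\langle\pi_{T_PC'}\pi_{N}v,v\rangle=\|\pi_{N}v\|^2>0$ for $v\neq 0$ by transversality ($T_PC\cap T_PC'=0$), whence $\det(M'M)>0$. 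Without one of these observations the proof does not close; as written, the crucial equality is asserted, and you yourself flag it as the unresolved ``bookkeeping.''

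For comparison, the paper sidesteps this adjointness issue entirely: it first applies Gram--Schmidt to each pair $(\bfu_1,\bfu_2)$, $({\tgrad}f_{\BR},{\tgrad}f_I)$, $(\bfv_1,\bfv_2)$, $({\tgrad}g_{\BR},{\tgrad}g_I)$ (which changes none of the relevant signs, by the block-determinant identity $\det((\bfa_1,\bfa_2)P,(\bfa_3,\bfa_4)Q)=\det(\bfa_1,\bfa_2,\bfa_3,\bfa_4)\det P\det Q$), so that the two quadruples become positive \emph{orthonormal} frames of $\BR^4$; it then proves that for any two positive orthonormal frames $(\bfu_1,\bfu_2,\bfu_3,\bfu_4)$ and $(\bfv_1,\bfv_2,\bfv_3,\bfv_4)$ one has $\det(\bfu_1,\bfu_2,\bfv_1,\bfv_2)=\det(\bfu_3,\bfu_4,\bfv_3,\bfv_4)$, by writing the transition matrix in $SO(4)$ in $2\times2$ blocks and showing $\det A_2=\det B_1$. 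The orthonormality there plays exactly the role of the missing adjointness step in your argument. Either supply the positive-definiteness argument above, or orthonormalize first as the paper does; your proposed reformulation via orientations of short exact sequences would also work, but it is not carried out in the proposal.
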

Recall that the tangent space
$T_PC$ is generated by  the vectors orthogonal to
the two dimensional subspace  $<{\tgrad}\, f_{\BR}(P),{\tgrad}\,f_I(P)>_{\BR}$.
Thus
 two dimensional planes $<\bfu_1,\bfu_2 >_{\BR}$ and $<{\tgrad}\, f_{\BR}(P),{\tgrad}\,f_I(P)>_{\BR}$  are
orthogonal.
Here $<\bfw_1,\bfw_2>_{\BR}$ is the   two dimensional plane spanned by $\bfw_1,\bfw_2$.
\subsubsection{Gram-Schmidt orthonormalization}
First we consider a simple assertion.
Let $\bfa_1,\bfa_2,\bfa_3,\bfa_4$ be column vectors in $\BR^4$
and let $P,Q$ be $2\times 2$
matrices.
Then
\begin{Assertion}\label{Assertion}
\[
\det((\bfa_1,\bfa_2)P,(\bfa_3,\bfa_4)Q)=\det(\bfa_1,\bfa_2,\bfa_3,\bfa_4)\det(P)\det(Q)
\]
\end{Assertion}
\begin{proof} The assertion follows from the simple equality in $4\times 4$ matirices:
\[
((\bfa_1,\bfa_2)P,(\bfa_3,\bfa_4)Q)=(\bfa_1,\bfa_2,\bfa_3,\bfa_4)
\left(\begin{matrix} P&O\\O&Q
\end{matrix}
\right).
\]
\end{proof}
Now we consider Gram-Schmidt   orthnormalization
 of $(\bfu_1,\bfu_2 )$ and \nl
$({\tgrad}\, f_{\BR},{\tgrad}\,f_I)$. They are orthonormal frames
$(\bfu_1',\bfu_2')$ and \nl $({\tgrad}\, f_{\BR}(P)',{\tgrad}\,f_I(P)')$
such that they satisfy the equalities:
\[
\begin{split}
&(\bfu_1',\bfu_2')= (\bfu_1,\bfu_2 )Q_1,\,\,\text{and} \\
&({{\tgrad}}\, f_{\BR}(P)',{{\tgrad}}\,f_I(P)')=({{\tgrad}}\, f_{\BR}(P),{\tgrad}\,f_I(P))Q_2
\end{split}
\] 
where $Q_1,Q_2$ are upper triangular  $2\times 2$ matrices with positive entries in their diagonals.
Similarly we consider the orthnormalization
\[\begin{split}
& (\bfv_1',\bfv_2')=(\bfv_1,\bfv_2)R_1\\
&({\tgrad} g_{\BR}(P)',{\tgrad} g_I(P)')=
 ({\tgrad} g_{\BR}(P),{\tgrad} g_I(P)) R_2,\end{split}
\]
where $ R_1,R_2$ are upper triangular matrices with
with positive entries  in their diagonals.
Using  Assertion \ref{Assertion}, we get 
\[
\begin{split}
\Sign(\bfu_1',\bfu_2',\bfv_1',\bfv_2')&=
\Sign((\bfu_1,\bfu_2)Q_1,(\bfv_1,\bfv_2)R_1)\\
&=\Sign(\bfu_1,\bfu_2,\bfv_1,\bfv_2)
\end{split}
\]
\[
\begin{split}
\Sign &({\tgrad} f_{\BR}(P)',{\tgrad}\,f_I(P)',{\tgrad}\,
  g_{\BR}(P)',{\tgrad}\,g_I(P)')\\
&=\Sign(({\tgrad} f_{\BR}(P),{\tgrad}\,f_I(P))Q_2,({\tgrad}\,g_{\BR}(P),
{\tgrad}\,g_I(P))R_2)\\
&= \Sign({\tgrad} f_{\BR}(P),{\tgrad}\,f_I(P),{\tgrad}\,g_{\BR}(P),{\tgrad}\,g_I(P)).
\end{split}
\]
Thus
 the  calculation of the intersection number  can be done using these
 orthonormal frames
\[
 (\bfu_1',\bfu_2',{\tgrad}\, f_{\BR}(P)',{\tgrad}\,f_I(P)'),\,
(\bfu_1',\bfv_2',{\tgrad}\, g_{\BR}(P)',{\tgrad}\,g_I(P)').
\]

Thus the proof of Theorem \ref{Theorem} reduces to the following.
\begin{Lemma} Assume that $(\bfu_1,\bfu_2 ,\bfu_3,\bfu_4)$ and $(\bfv_1,\bfv_2,\bfv_3,\bfv_4)$ be positive orthonormal frames
of $\BR^4$.
Then 
\[
\det (\bfu_1,\bfu_2 ,\bfv_1,\bfv_2)=\det (\bfu_3,\bfu_4,\bfv_3,\bfv_4).
\]
\end{Lemma}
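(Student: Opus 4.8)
The plan is to translate the statement into the language of $\mathrm{SO}(4)$ and then reduce to a classical identity on complementary minors. Let $U$ be the $4\times 4$ matrix with columns $\bfu_1,\bfu_2,\bfu_3,\bfu_4$ and $V$ the one with columns $\bfv_1,\bfv_2,\bfv_3,\bfv_4$; since both are positive orthonormal frames, $U,V\in\mathrm{SO}(4)$. Because $\det U=1$ and $U^{t}=U^{-1}$, left multiplication by $U^{t}$ does not change the determinant of $(\bfu_1,\bfu_2,\bfv_1,\bfv_2)$, while it sends $\bfu_1,\bfu_2$ to $\bfe_1,\bfe_2$ and $\bfv_j$ to $\bfw_j:=U^{t}\bfv_j$; the same applies to $(\bfu_3,\bfu_4,\bfv_3,\bfv_4)$. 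So, with $W:=U^{t}V=(\bfw_1,\bfw_2,\bfw_3,\bfw_4)\in\mathrm{SO}(4)$, it suffices to prove
\[
\det(\bfe_1,\bfe_2,\bfw_1,\bfw_2)=\det(\bfe_3,\bfe_4,\bfw_3,\bfw_4).
\]

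Next I would identify the two sides with complementary $2\times 2$ minors of $W$. Expanding $\det(\bfe_1,\bfe_2,\bfw_1,\bfw_2)$ along its first two columns leaves the minor of $W$ on rows $\{3,4\}$ and columns $\{1,2\}$, while expanding $\det(\bfe_3,\bfe_4,\bfw_3,\bfw_4)$ along its first two columns leaves the minor of $W$ on rows $\{1,2\}$ and columns $\{3,4\}$ (the Laplace sign is $+1$ in both cases since $(1+2)+(3+4)$ is even). Hence the Lemma reduces to: for $W\in\mathrm{SO}(4)$, the minor on rows $\{1,2\}$, columns $\{3,4\}$ equals the minor on rows $\{3,4\}$, columns $\{1,2\}$.

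This last assertion is the only genuinely non-formal step, and is where I expect the real content to sit; the rest is bookkeeping. I would dispatch it in one of two equivalent ways. First, by Jacobi's theorem on complementary minors: for invertible $W$, $\det\bigl(W^{-1}[I\mid J]\bigr)=(-1)^{\sum I+\sum J}\det\bigl(W[\bar J\mid\bar I]\bigr)/\det W$; taking $I=\{3,4\}$, $J=\{1,2\}$ and using $W^{-1}=W^{t}$, $\det W=1$, the left-hand side is (up to a transposition, which does not affect the determinant) the minor of $W$ on rows $\{1,2\}$, columns $\{3,4\}$, and the right-hand side, with sign $(-1)^{10}=1$, is the minor on rows $\{3,4\}$, columns $\{1,2\}$. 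Alternatively, and more transparently, the rows $r_1,r_2,r_3,r_4$ of $W$ form a positively oriented orthonormal basis of $\BR^4$, so the Hodge star on $\Lambda^2\BR^4$ satisfies $*(r_1\wedge r_2)=r_3\wedge r_4$; comparing the $\bfe_1\wedge\bfe_2$-components of both sides gives precisely $w_{13}w_{24}-w_{14}w_{23}=w_{31}w_{42}-w_{32}w_{41}$, which is the desired equality of minors. Either route completes the proof.
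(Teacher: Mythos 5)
Your proof is correct. The overall reduction is the same as the paper's: both arguments pass to the single matrix in $SO(4)$ relating the two frames and boil the Lemma down to the identity that the two complementary off-diagonal $2\times 2$ minors of a special orthogonal $4\times 4$ matrix are equal (in the paper's block notation $A=\left(\begin{smallmatrix}A_1&A_2\\B_1&B_2\end{smallmatrix}\right)$, this is $\det A_2=\det B_1$; in yours, $\det W[\{1,2\}\,|\,\{3,4\}]=\det W[\{3,4\}\,|\,\{1,2\}]$). Where you diverge is in how that identity is established. The paper proves it from scratch by a double computation: it writes the $\bfu$'s in terms of the $\bfv$'s via $A$ and the $\bfv$'s in terms of the $\bfu$'s via ${}^tA=A^{-1}$, evaluates $\det(\bfu_1,\bfu_2,\bfv_1,\bfv_2)$ both ways to get $\det B_1=\det A_2$, and then checks that $\det(\bfu_3,\bfu_4,\bfv_3,\bfv_4)$ equals the same two block determinants. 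You instead normalize one frame to the standard basis and invoke Jacobi's complementary-minor theorem (or, equivalently, $*(r_1\wedge r_2)=r_3\wedge r_4$ for a positive orthonormal basis). Both routes are valid; the paper's has the virtue of being entirely self-contained at the level of $4\times 4$ determinant manipulations, while your Hodge-star formulation is the more conceptual one and makes transparent why only orthogonality and positivity of the orientation are used. One tiny imprecision: for $\det(\bfe_1,\bfe_2,\bfw_1,\bfw_2)$ the relevant Laplace sign is $(-1)^{(1+2)+(1+2)}$, not $(-1)^{(1+2)+(3+4)}$; it is still $+1$, so nothing breaks.
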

\begin{proof}

  Assume that 
  \begin{eqnarray}\label{trans1}
  (\bfu_1,\bfu_2
 ,\bfu_3,\bfu_4)=(\bfv_1,\bfv_2,\bfv_3,\bfv_4)A,
 \end{eqnarray}
with 
 $A\in SO(4;\BR)$. Write $A$ by $2\times 2$ matrices  as
\[
 A=\left(\begin{matrix}
A_1& A_2\\
B_1&B_2
\end{matrix}
\right)
\]
The equality  (\ref{trans1}) can be rewritten as
\begin{eqnarray}\label{trans2}
(\bfv_1,\bfv_2,\bfv_3,\bfv_4)=(\bfu_1,\bfu_2 ,\bfu_3,\bfu_4)\,{}^tA
\end{eqnarray}
where
\[
{}^t A=\left(\begin{matrix}
{}^tA_1& {}^tB_1\\
{}^t A_2&{}^tB_2
\end{matrix}
\right).
\]
First we consider the equality from (\ref{trans1}):
\[\begin{split}
 \det(\bfu_1,\bfu_2,\bfv_1,\bfv_2)&=
\det((\bfv_1,\bfv_2)A_1+(\bfv_3,\bfv_4)B_1,\bfv_1,\bfv_2)\\
&=\det(\bfv_1,\bfv_2,(\bfv_3,\bfv_4)B_1)\\
&=\det B_1.
\end{split}
\]
On the other hand, we have also  from (\ref{trans2}):
\begin{eqnarray}
 \det(\bfu_1,\bfu_2
		 ,\bfv_1,\bfv_2)&=\det(\bfu_1,\bfu_2,(\bfu_3,\bfu_4)\,{}^tA_2)\\
&=\det\,{}^tA_2=\det A_2
\end{eqnarray}
Thus $\det A_2=\det B_1$.
Similarly we get 
\[
\begin{split}
 \det(\bfu_3,\bfu_4,\bfv_3,\bfv_4) &=\det((\bfv_1,\bfv_2)A_2,\bfv_3,\bfv_4) =\det\, A_2 \\
 &=\det(\bfu_3,\bfu_4,(\bfv_1,\bfv_2)\, {}^tB_1)=\det\,B_1.\\
\end{split}
\]
Thus  the assertion
follows from these equalities.
\end{proof}
\subsection{Local intersection number II (General case)}
Assume that $C:\,f(\bfz,\bar\bfz)=0$ and  $C': g(\bfz,\bar \bfz)=0$ be mixed curve as above and let $P$ be an isolated
intersection
point of $C\cap C'$. We assume also that   both  $C$ and $C'$ have  at worst an isolated
mixed singularity at $P$.
\begin{Definition} \label{local intersection}Let 
$ \vphi=(f_{\BR},f_I, g_{\BR},g_I):\BR^4\to \BR^4$.
We define the local intersection number
$I_{top}C,C';P)$ by the local mapping degree of
the normalized 
mapping
$\psi$  of $\vphi$:
\[
\psi:=\vphi/\|\vphi\|:S_\eps^3(P)\to S^3.
\]
Here  $S_\eps^3(P):=\{\bfx\in \BR^4\,|\, \|\bfx-P\|=\eps\}$ and $\eps$
is a sufficiently small positive number so that $P$ is the only
 intersection of $C$ and $C'$
 in $B_\eps(P)$ where 
 $B_\eps(P)$ is the disk of radius $\eps$ centered at $P$.
\end{Definition}
Suppose that $P$ is a transverse intersection of $C$ and $C'$  and
assume that  $C$ and $C'$ are mixed
smooth  at $P$. Take a small positive number $\eps$ so that 
\[
\| \vphi^{(1)}(\bfz)\|\ge 2\|\vphi-\vphi^{(1)}(\bfz)\|,
 \quad \|\bfz-P\|=\eps
\]
where
$\vphi^{(1)}$ is the linear term of $\vphi$ at $P$. 
Then we consider the homotopy $\vphi_t=(1-t)\vphi+t\vphi^{(1)},\,0\le t\le 1$.
Then the normalized mapping $\psi$
is homotopic to that of $\vphi^{(1)}$ on $S_\eps^3(P)$. The latter is nothing but the 
normalization of $({\tgrad}\,f_{\BR},{\tgrad} \, f_{I},{\tgrad}\, g_{\BR},{\tgrad}\, g_I)$.
Thus
\begin{Proposition} This definition coincides with the topological local
intersection number if  the intersection is transverse and two curves 
$C,C'$ are mixed non-singular at $P$.
\end{Proposition}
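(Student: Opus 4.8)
The plan is to show that the normalized map $\psi$ of $\vphi$ on a small sphere $S_\eps^3(P)$ is homotopic, as a map to $S^3$, to the normalized map of the linear part $\vphi^{(1)}$, and then to identify the latter's degree with the sign computed in Theorem \ref{Theorem}. Concretely, I would first use the transversality and mixed-smoothness hypotheses to observe that $\vphi^{(1)}$, the linear term of $\vphi=(f_{\BR},f_I,g_{\BR},g_I)$ at $P$, is a linear isomorphism of $\BR^4$: its matrix has rows ${\tgrad}\,f_{\BR}(P), {\tgrad}\,f_I(P), {\tgrad}\,g_{\BR}(P), {\tgrad}\,g_I(P)$, and mixed-smoothness of $C$ (resp. $C'$) makes the first (resp. last) two rows independent, while transversality of $T_PC$ and $T_PC'$ — equivalently, the complementarity of the two $2$-planes $\langle{\tgrad}\,f_{\BR}(P),{\tgrad}\,f_I(P)\rangle$ and $\langle{\tgrad}\,g_{\BR}(P),{\tgrad}\,g_I(P)\rangle$ — makes all four rows independent. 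Hence $\|\vphi^{(1)}(\bfz)\|$ is bounded below by a positive constant times $\|\bfz-P\|$ on any sphere, so for $\eps$ small the displayed inequality $\|\vphi^{(1)}(\bfz)\|\ge 2\|\vphi-\vphi^{(1)}(\bfz)\|$ holds on $\|\bfz-P\|=\eps$, since $\vphi-\vphi^{(1)}$ vanishes to order $\ge 2$ at $P$ (here one must check that $f,g$, being polynomials in $x_1,y_1,x_2,y_2$, have genuine Taylor expansions with controlled remainder — routine).

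Next, along the straight-line homotopy $\vphi_t=(1-t)\vphi+t\vphi^{(1)}$ one has $\|\vphi_t(\bfz)\|\ge \|\vphi^{(1)}(\bfz)\| - (1-t)\|\vphi-\vphi^{(1)}(\bfz)\| \ge \tfrac12\|\vphi^{(1)}(\bfz)\|>0$ for all $t\in[0,1]$ and all $\bfz$ with $\|\bfz-P\|=\eps$. Therefore $\vphi_t/\|\vphi_t\|$ is a well-defined homotopy $S_\eps^3(P)\to S^3$ from $\psi$ to $\vphi^{(1)}/\|\vphi^{(1)}\|$, so these two maps have the same degree; this degree is, by Definition \ref{local intersection}, the local intersection number $I_{top}(C,C';P)$ as computed by the right-hand side of that definition.

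It then remains to compute the degree of $\vphi^{(1)}/\|\vphi^{(1)}\|$ on $S_\eps^3(P)$. Since $\vphi^{(1)}$ is linear and invertible, its restriction to a small sphere about $P$, followed by normalization, is homotopic (through normalized linear isomorphisms, or by the standard fact that a linear isomorphism $A$ induces a degree-$\sign(\det A)$ self-map of the sphere) to a map of degree $\sign(\det\vphi^{(1)})$. But $\det\vphi^{(1)}$ is exactly the determinant of the matrix with rows ${\tgrad}\,f_{\BR}(P),{\tgrad}\,f_I(P),{\tgrad}\,g_{\BR}(P),{\tgrad}\,g_I(P)$, so its sign is $\Sign({\tgrad}\,f_{\BR}(P),{\tgrad}\,f_I(P),{\tgrad}\,g_{\BR}(P),{\tgrad}\,g_I(P))$, which by Theorem \ref{Theorem} equals $I_{top}(C,C';P)$ in the sense of \S 3.1. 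This establishes that the two definitions agree. The text in the excerpt already sketches exactly this argument; I would simply make explicit the three points above — invertibility of $\vphi^{(1)}$ from the hypotheses, positivity of $\|\vphi_t\|$ on the sphere (so the homotopy lands in $S^3$), and $\deg$ of a normalized linear isomorphism $=\sign\det$.

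The main obstacle, such as it is, is purely bookkeeping: making sure the neighborhood $B_\eps(P)$ can simultaneously be chosen small enough that (i) $P$ is the only point of $C\cap C'$ in it (so $\psi$ is defined on all of $S_\eps^3(P)$ and its degree is the local degree), (ii) the quadratic remainder estimate $\|\vphi-\vphi^{(1)}\|\le \tfrac12\|\vphi^{(1)}\|$ holds on the boundary sphere, and (iii) the homotopy $\vphi_t$ stays away from $0$ on that sphere; all three follow from invertibility of $\vphi^{(1)}$ together with the order-$\ge 2$ vanishing of the remainder, but one should state the order of choices cleanly (pick $\eps$ from the linear lower bound and the remainder bound; (i) is then automatic after possibly shrinking further, since a transverse isolated intersection is in particular isolated).
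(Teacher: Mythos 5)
Your proposal is correct and follows essentially the same route as the paper: linearize $\vphi$ at $P$, use the straight-line homotopy $\vphi_t=(1-t)\vphi+t\vphi^{(1)}$ (justified by the inequality $\|\vphi^{(1)}\|\ge 2\|\vphi-\vphi^{(1)}\|$ on a small sphere) to reduce to the normalized linear map, and identify its degree with $\Sign({\tgrad}\,f_{\BR}(P),{\tgrad}\,f_I(P),{\tgrad}\,g_{\BR}(P),{\tgrad}\,g_I(P))$, which equals $I_{top}(C,C';P)$ by Theorem \ref{Theorem}. You merely make explicit the details (invertibility of $\vphi^{(1)}$, the order of choice of $\eps$, and the degree-equals-sign-of-determinant fact) that the paper leaves implicit.
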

\subsubsection{Stability of the intersection number under a bifurcation}
Consider two mixed algebraic curves $C: f=0$ and $C':g=0$ and assume that 
$P\in C\cap C'$ be an isolated point of $C\cap C'$ (but probably not a
transversal intersection).
Let $f_t,g_t,\,|t|\le \rho$ be two continuous families of mixed polynomials
such that $f_0=f,\,g_0=g$.
We take  a fixed $\eps>0$  so that $C\cap C'\cap B_\eps^4(P)=\{P\}$
with
$B_\eps^4(P)=\{\bfz\in \BC^2\,|\, \|\bfz-P\|\le \eps\}$ and put 
$C_t=\{\bfz\in \BC^2\,|\, f_t(\bfz,\bar \bfz)=0\}$
and 
$C'_s=\{\bfz\in \BC^2\,|\, g_s(\bfz,\bar\bfz)=0\}$.
Take a sufficiently small $\gamma>0$ so that 
\[
\{\bfz\in S_\eps\,|\,f_\al(\bfz,\bar\bfz)=g_\be(\bfz,\bar \bfz)=0\}=\emptyset,
\quad |\al|,\be|\le \ga\le \rho.
\]
Take $\de,\de'$  with $|\de|,|\de'|\le \ga$ and assume that 
$C_\de\cap C'_{\de'}\cap B_\eps^4(P)=\{P_1,\dots, P_\nu\}$
and at each point $P_j$,  two curves $C_\de$ and $C_{\de'}'$ are smooth and they intersect transversely.
Then we claim:
\begin{Theorem} Suppose that $P\in C\cap C'$ is bifurcated into  $\nu$
transverse intersections  in the near fibers $C_\de\cap C_{\de'}'$ as above. 
Let  $a$ and $b$  the number of positive and negative
 intersection points among $\{P_1,\dots, P_\nu\}$ ($a+b=\nu$).
Then $I_{top}(C,C';P)=a-b$.
\end{Theorem}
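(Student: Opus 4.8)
The plan is to reduce the statement to the additivity of local mapping degree under restriction to small spheres. Recall from Definition~\ref{local intersection} that $I_{top}(C,C';P)$ is the degree of $\psi=\vphi/\|\vphi\|:S_\eps^3(P)\to S^3$, where $\vphi=(f_{\BR},f_I,g_{\BR},g_I)$. First I would choose $\de,\de'$ with $|\de|,|\de'|\le\ga$ so that $C_\de\cap C'_{\de'}\cap B_\eps^4(P)=\{P_1,\dots,P_\nu\}$ consists of transverse, mixed-smooth intersection points, and set $\vphi_{\de,\de'}=((f_\de)_{\BR},(f_\de)_I,(g_{\de'})_{\BR},(g_{\de'})_I)$. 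The key homotopy-invariance observation is that by the choice of $\ga$, the map $\vphi_{\al,\be}$ has no zero on $S_\eps$ for $|\al|,|\be|\le\ga$, so the family $(\al,\be)\mapsto\vphi_{\al,\be}/\|\vphi_{\al,\be}\|$ is a homotopy of maps $S_\eps^3(P)\to S^3$ connecting $\psi$ (at $(\al,\be)=(0,0)$) to $\vphi_{\de,\de'}/\|\vphi_{\de,\de'}\|$. Hence $I_{top}(C,C';P)=\deg(\vphi_{\de,\de'}/\|\vphi_{\de,\de'}\|\,:\,S_\eps^3(P)\to S^3)$.

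Next I would invoke the standard additivity of the mapping degree (equivalently, of the topological/Kronecker index) of the map $\vphi_{\de,\de'}$ on the ball $B_\eps^4(P)$. Since $\vphi_{\de,\de'}$ vanishes in $B_\eps^4(P)$ only at the isolated points $P_1,\dots,P_\nu$, one may pick disjoint small balls $B_{\eps_j}(P_j)\subset B_\eps^4(P)$, and the degree of $\vphi_{\de,\de'}/\|\vphi_{\de,\de'}\|$ on $S_\eps^3(P)$ equals $\sum_{j=1}^\nu \deg(\vphi_{\de,\de'}/\|\vphi_{\de,\de'}\|:S_{\eps_j}^3(P_j)\to S^3)$. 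This is the usual "sum of local indices" property for the degree of a map that is nonzero on $S_\eps\setminus\bigcup_j B_{\eps_j}(P_j)$ and can be seen by a cobordism/collar argument: the region between $S_\eps$ and $\bigsqcup_j S_{\eps_j}(P_j)$ gives a homology between the cycles, so the degrees match.

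Finally, for each $P_j$ the curves $C_\de$ and $C'_{\de'}$ are mixed-smooth and meet transversely, so by Proposition~\ref{...} (the Proposition immediately preceding this theorem) the local degree $\deg(\vphi_{\de,\de'}/\|\vphi_{\de,\de'}\|:S_{\eps_j}^3(P_j)\to S^3)$ equals the topological local intersection number $I_{top}(C_\de,C'_{\de'};P_j)$, which by definition is $+1$ if $P_j$ is a positive intersection and $-1$ if it is negative. Summing over $j$ yields $I_{top}(C,C';P)=a-b$, as claimed.

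I expect the main obstacle to be making the degree-additivity step fully rigorous in the present setting — i.e., justifying that $\psi$ on $S_\eps^3(P)$ is homotopic, through maps into $S^3$, to the normalization of $\vphi_{\de,\de'}$ and that the latter's degree decomposes as the sum of the local degrees at the $P_j$. This requires that $\vphi_{\al,\be}$ never vanish on $S_\eps$ throughout the deformation, which is exactly guaranteed by the choice of $\ga$ above, and an appeal to the standard fact that the degree of a sphere-valued map extends additively over the zeros of its (non-normalized) extension to the ball; both are routine but should be stated carefully. The transversality input at each $P_j$ and the sign bookkeeping are then immediate from the earlier Proposition and the definition of positive/negative transverse intersection.
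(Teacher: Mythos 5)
Your proposal is correct and takes essentially the same route as the paper's own proof: a homotopy through the normalized maps $\vphi_{t,s}/\|\vphi_{t,s}\|$ on $S_\eps^3(P)$ (valid because the choice of $\ga$ keeps them nonvanishing there), followed by additivity of the degree over the isolated zeros $P_1,\dots,P_\nu$ via the region $B_\eps^4(P)\setminus\bigcup_j \mathrm{Int}\,B_r^4(P_j)$, and finally the identification of each local degree with $\pm 1$ using the preceding Proposition on transverse mixed-smooth intersections. The only cosmetic difference is that the paper phrases the additivity step via the fundamental class relation $[S_\eps^3(P)]=\sum_j[S_r^3(P_j)]$ in $H_3$ of that region, while you describe the same fact as a cobordism/collar argument.
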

\begin{proof}
The assertion follows from the following  standard topological
argument.
First,  we consider 
 the   map of the pair
 $\vphi_{t,s}=(f_t,g_s)$ and its normalized one:
\[
\psi_{t,s}: S_\eps^3\to S^3 ,\quad
\psi_{t,s}(\bfz,\bar\bfz)=\vphi_{t,s}(\bfz,\bar\bfz)/ \|\vphi_{t,s}(\bfz,\bar\bfz)\|.
\]
The mapping degree of $\psi_{t,s}$
is independent of $t$ and $s$  for any $|t|\le \ga,|s|\le \ga$.

Secondly, take a sufficiently small positive number $0<r\ll \eps$
so that the disks
$B_r^4(P_j),\,j=1,\dots,\nu$ are mutually disjoint and do not intersect
with $S_\eps^3(P)$.
Then $\psi_{\de,\de'}$ is extended to a mapping 
\[\psi_{\de,\de'} : \,X:=B_\eps^4(P)\setminus\cup_{j=1}^\nu \text{Int}B_r^4(P_j)\to S^3
\] where $\text{Int} B_r^4(P_j)=B_r^4(P_j)\setminus S_r^3(P_j) $.
Thus the fundamental class $[S_\eps(P)]$ is 
equal to the sum of fundamental classes $\sum_{i=1}^\nu[S_r(P_j)]$ in $H_3(X)$,
 the mapping degree of
$\psi_{\de,\de'}: S_\eps^3(P)\to S^3$ is the sum of the local mapping
degrees
of $\psi_{\de,\de'}: S_r^3(P_j)\to S^3$.
\end{proof}
\begin{Remark} Note that $a,b$ in the above theorem  depends on the bifurcation but $a-b$
is independent of the chosen bifurcation.
Note also that $a,b$ can be $0$ which implies 
$C_\de\cap C'_{\de'}\cap B_\eps^4(P)=\emptyset$.
See Example \ref{Example1}.
\end{Remark}
\subsection{Global intersection number}
We consider the global intersection number.
Let $C:F(\bfX,\bar \bfX)=0$ and $C':\, G(\bfX,\bar \bfX)=0$
be mixed projective curves in $\BP^2$ defined by strongly polar homogeneous
polynomials
$F$ and $G$ of polar degree $d$ and $d'$ respectively.
We assume also that the mixed  singularities of $C$ and $C'$ are at worst
isolated singularities. Then by Theorem 11, \cite{MC}, they have  respective
fundamental cycles $[C]$ and $[C']$.
Here $\bfX=(X_0,X_1,X_2)$ are homogeneous coordinates of $\BP^2$.
Assume that $C\cap C'\cap \{X_0=0\}=\emptyset$.
We consider the affine space $\BC^2$ with coordinates $z_1=X_1/X_0$ and
$z_2=X_2/X_0$ respectively
and put
\[
f(\bfz,\bar\bfz):=F(1,z_1,z_2,1,\bar z_1,\bar  z_2),\quad
g(\bfz,\bar\bfz):=G(1,z_1,z_2,1,\bar z_1,\bar  z_2)
\]
respectively. Let $C\cap C'=\{P_1,\dots,P_\mu\}$.
Then by Theorem 11, \cite{MC}, the fundamental classes $[C]$, $[C']$ of
$C,C'$  exist and they satisfy, in 
$H_2(\BP^2)$    
\[
[C]=d[\BP^1],\quad [C']=d'[\BP^1]
\]
where $[\BP^1]$ is the homology class corresponding to the fundamental 
class of  the complex line $\BP^1\subset \BP^2$.
Thus we have the equality  $[C]\cdot [C']=dd'$.
Now we  have the equality:
\begin{Theorem}
\[
\sum_{j=1}^\mu \,I_{top}(C,C';P_j)=dd'.
\]
\end{Theorem}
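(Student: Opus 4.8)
Since the identity $[C]\cdot[C']=dd'$ has already been recorded just before the statement, the real content of the theorem is the equality $\sum_{j=1}^\mu I_{top}(C,C';P_j)=[C]\cdot[C']$ between the signed count of the local mixed intersection numbers and the homological intersection number in $H_2(\BP^2)$. My plan is to deform $C$ and $C'$ slightly, within strongly polar homogeneous polynomials of the same polar degrees, so that the two curves become mixed non-singular near their common points and meet transversally there, and then to apply ordinary oriented intersection theory on the compact oriented $4$-manifold $\BP^2$ together with the degree formula (Theorem 11 of \cite{MC}).

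Concretely I would choose continuous families $F_t,G_s$ of strongly polar homogeneous polynomials of polar degrees $d,d'$ with $F_0=F$, $G_0=G$ and at worst isolated mixed singularities, obtained by adding small generic mixed homogeneous perturbations, and pass to the affine chart $\{X_0\neq 0\}$ to get families $f_t,g_s$ with $f_0=f$, $g_0=g$. Because $C\cap C'$ is disjoint from $\{X_0=0\}$ and $\BP^2$ is compact, for all sufficiently small parameters the locus $C_t\cap C'_s$ stays in a fixed compact part of the affine chart and, by continuity, inside the balls $B_\eps(P_j)$ used in Definition \ref{local intersection}. Assuming one can pick arbitrarily small $\de,\de'$ for which $C_\de$ and $C'_{\de'}$ are mixed non-singular along $C_\de\cap C'_{\de'}$ and intersect transversally there, the bifurcation (stability) theorem of the previous subsection, applied separately in each $B_\eps(P_j)$, gives
\[
I_{top}(C,C';P_j)=\sum_{Q\in C_\de\cap C'_{\de'}\cap B_\eps(P_j)}I_{top}(C_\de,C'_{\de'};Q),
\]
and hence $\sum_{j=1}^\mu I_{top}(C,C';P_j)=\sum_{Q\in C_\de\cap C'_{\de'}}I_{top}(C_\de,C'_{\de'};Q)$.

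For the smooth transverse curves $C_\de,C'_{\de'}$, which carry fundamental cycles by Theorem 11 of \cite{MC}, the signed number of intersection points is by definition the homological intersection number, so $\sum_Q I_{top}(C_\de,C'_{\de'};Q)=[C_\de]\cdot[C'_{\de'}]$; here one only needs to note that the local sign from \S3.1 (equivalently the gradient determinant of Theorem \ref{Theorem}) is exactly the sign the intersection pairing attaches to $Q$, both being read off the canonical orientations of the curves and of $\BP^2$. Since every $F_t$ has polar degree $d$ and every $G_s$ has polar degree $d'$, Theorem 11 of \cite{MC} gives $[C_\de]=d[\BP^1]=[C]$ and $[C'_{\de'}]=d'[\BP^1]=[C']$, whence $[C_\de]\cdot[C'_{\de'}]=[C]\cdot[C']=dd'$. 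Combining the displayed equalities proves the theorem.

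The main obstacle is the genericity step, namely that a generic small strongly polar homogeneous deformation makes each curve mixed non-singular near the intersection locus and makes the two curves meet transversally there. This is a mixed (real-analytic) analogue of Bertini's theorem and does not follow formally, since mixed polynomials satisfy neither the classical Bertini theorem nor automatic Sard transversality; I would establish it by a Sard-type argument on a finite-dimensional space of admissible perturbations, perturbing $f$ and $g$ independently so as first to control the rank of the Jacobian of $\big((f_\de)_\BR,(f_\de)_I\big)$ along $C_\de$ and then the rank of the Jacobian of $\vphi_{\de,\de'}=\big((f_\de)_\BR,(f_\de)_I,(g_{\de'})_\BR,(g_{\de'})_I\big)$ along $C_\de\cap C'_{\de'}$. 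If that turns out to be too rigid, a fallback is to dispense with global deformation: by additivity of the mapping degree, $\sum_j I_{top}(C,C';P_j)$ equals the degree of the normalized map $\psi\colon S_R^3\to S^3$ on a large sphere enclosing all the $P_j$, and this degree can in principle be evaluated from the behaviour of $F$ and $G$ near the line $\{X_0=0\}$, using that $C\cap C'$ misses that line — though the bookkeeping there is considerably heavier.
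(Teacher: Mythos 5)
Your overall strategy --- reduce everything to $[C]\cdot[C']=dd'$ by perturbing to a smooth transverse configuration and invoking the stability theorem separately in each ball $B_\eps(P_j)$ --- is a natural one, and in fact the paper offers essentially nothing beyond the reduction itself: it records $[C]=d[\BP^1]$, $[C']=d'[\BP^1]$, hence $[C]\cdot[C']=dd'$, and then states the theorem with no written proof. So there is no detailed argument of the paper's to compare yours against; the only question is whether your outline actually closes.

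It does not quite close, and the gap is exactly the one you flag yourself: the existence, within strongly polar homogeneous polynomials of the fixed polar degrees $d$ and $d'$, of arbitrarily small perturbations $F_\de$, $G_{\de'}$ for which $C_\de$ and $C'_{\de'}$ are mixed non-singular along their common locus and meet transversally in finitely many points. This is not a formality in the mixed category. The relevant system $f_{\BR}=f_I=g_{\BR}=g_I=0$ is four real equations in four real unknowns with no positivity or complex-analytic rigidity behind it, and Example \ref{Example1} of the paper shows how delicate even natural one-parameter deformations are: for $C:z_1=0$ and $C':2z_1+z_1\bar z_1+z_2\bar z_2=0$, the deformation $C_t:z_1-t=0$ gives an empty intersection for $t>0$ and a whole circle of intersection for $t<0$, never a finite transverse configuration. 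A Sard-type argument over a finite-dimensional space of perturbations could in principle work, but you must exhibit a family that is rich enough to achieve transversality while simultaneously respecting the polar-degree constraint and keeping the perturbed curves with at worst isolated mixed singularities globally (so that Theorem 11 of \cite{MC} still supplies fundamental classes with $[C_\de]=d[\BP^1]$); none of this is automatic, and no such mixed Bertini statement is proved anywhere in the paper. Your fallback --- identifying each local mapping degree $I_{top}(C,C';P_j)$ directly with the local homological intersection of the cycles $[C]$ and $[C']$ at $P_j$ (e.g.\ as the linking number of $C\cap S_\eps^3(P_j)$ and $C'\cap S_\eps^3(P_j)$ in $S_\eps^3(P_j)$), and then summing to evaluate $[C]\cdot[C']$ --- bypasses the transversality problem entirely and is the more robust route; I would promote it from contingency plan to main argument.
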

\begin{Example}\label{Example1}
Consider the special case:
\[
 C:\,z_1=0,\quad C': g(\bfz,\bar \bfz)=2z_1+z_1\bar z_1+z_2\bar z_2=0.
\]
Then $\tilde C$ is the projective line $z_1=0$  of degree 1 and 
$\tilde C'$ is the mixed curve of polar degree 0 which is defined by
$G(\bfZ,\bar\bfZ)=2\bar Z_0 Z_1+Z_1\bar Z_1+Z_2\bar Z_2=0$.
Actually $C'$ is a 2 dimensional sphere
\[
C':  \quad y_1=0, (x_1+1)^2+x_2^2+y_2^2=1
\]
and it has  a mixed singular point  $(-1,0)$.
We see that $\tilde C\cap\tilde C'=\{(1:0:0)\}$
and $\tilde C\cdot \tilde C'=0$.
This implies $I(C,C';(0,0))=0$. In fact, consider the bifurcation
$C_t=\{z_1-t=0\}$. 
It is easy to see that $C_t\cap C'=\emptyset$ if $t>0$.
For $t<0$ small, the intersection $C_t\cap C'$ is a circle.
\end{Example}

\subsubsection{Remark}
1. {\bf Twisted line.} The singular locus of a mixed curve can be non-isolated, even if we
 assume that it does not have any real codimension 1 components.

 Consider the curve $f(\bfz,\bar \bfz)= z_1-\bar z_2$.
 Then $C$ is a
 smooth real two-plane and $\tilde C$ is defined by  
$F=Z_1\bar Z_0 -  Z_0 \bar Z_2=0$.
We call $C$  (and  $\tilde C$) {\em a twisted line}. 
Let $\bar C\subset \BP^2$ be the topological closure.
The complex line at infinity $L_\infty$ is defined by $Z_0=0$.
 To see more detail structure,  we consider the coordinate chart
$U_2=\{Z_2\ne 0\}$ with  complex coordinates
      $(u_0,u_1)=(Z_0/Z_2,Z_1/Z_2)$.
Then $\tilde C\cap U_2$ is defined by
\begin{eqnarray}\label{twisted}
f_2(u_0,u_1)=u_1\bar u_0-u_0=0.
\end{eqnarray}
We observe that

 (a)  $\tilde C=L_\infty\cup \bar C$ and $S:=L_\infty\cap \bar C$
is a circle defined by
$L_\infty\cap \{|Z_1/Z_2|=1\}$. This follows from (\ref{twisted}),
as $|u_1|=1$.

 (b) The singular locus of $\tilde C$ is  equal to
$S$. Using the coordinates $(u_0,u_1)$ on $U_2$,
$S$ is defined by $|u_1|=1$ on $L_\infty=\{u_0=0\}$.
As a  1-cycle, we orient it counterclockwise.
 Inside the circle $S$ (i.e., $|u_1|<1$),
the orientation is same with the disk
$\De:=\{u_1\in \BC||u_1|<1\}$. Outside $\{|u_1|>1\}$ of $S$, 
the orientation is
opposite to the complex structure with coordinates $u_1$. 
The singular locus can be computed by the Jacobian matrix of
$(f_{2\BR},f_{2 I})$
or by Proposition 1, \cite{OkaPolar}

 (c) Let $U_0=\{Z_0\ne 0\}$. In this coordinate, $p:C\to \BC$,
$p(z_1,z_2)=z_2$ is an orientation preserving diffeomorphism.
The circle $S_R:=\{|z_2|=R\}$ converges to $-2S$ when $R\to \infty$.
\begin{proof}
To see this, consider the large circle
$S_R$ parametrized by 
$z_1=R e^{-i\theta},\,z_2=R
 e^{i\theta},\,0\le\theta\le 2\pi$. In the chart $U_2$,
this corresponds to
\[
 u_0(\theta)=\frac 1R e^{-i\theta},\quad
u_1(\theta)=z_1/z_2=e^{-2i\theta}.
\]
\end{proof}
In \cite{MC}, we have observed that there exists a fundamental class
$[D]\in H_2(D) $ for any mixed projective curve $D$ with at most
 isolated mixed singularities. Our curve $\tilde C$ has non-isolated
 singularities along $S$. However we claim that

 \begin{Claim}$\tilde C$ has a fundamental class.
\end{Claim}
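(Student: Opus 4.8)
The plan is to build the fundamental class of $\tilde C$ by hand, splitting $\tilde C$ into the pieces $\bar C$ (the closure of the affine part) and $L_\infty$, which meet along the singular circle $S$, and then checking that the naive sum of their oriented pieces is actually a cycle. Concretely, $\bar C$ is the closure of the smooth real $2$-plane $C=\{z_1=\bar z_2\}$, which carries the canonical orientation coming from the mixed structure (as described in \S 2); by item (c) of the Remark, under the projection $p(z_1,z_2)=z_2$ the boundary circle $S_R=\{|z_2|=R\}$ of $\bar C\cap U_0$ converges, as an oriented $1$-cycle, to $-2S$. On the other side, $L_\infty=\{Z_0=0\}$ is a complex projective line $\BP^1$ with its complex orientation, and $S\subset L_\infty$ is the circle $|u_1|=1$; cutting $L_\infty$ along $S$ gives an inner disk $\De=\{|u_1|<1\}$ and an outer disk $\{|u_1|>1\}$. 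Item (b) tells us the mixed orientation of $\tilde C$ agrees with the complex orientation of $L_\infty$ on $\De$ and is opposite on the outer disk, so as a chain the "$L_\infty$-part" of $\tilde C$ should be taken to be $\De - (\text{outer disk})$.

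The key computation is the boundary count along $S$. Orient $S$ counterclockwise as in (b). Then $\partial[\De]=S$ and $\partial[-(\text{outer disk})]=S$ as well (the outer disk, with the orientation opposite to the complex one, has boundary $+S$ rather than $-S$), so the $L_\infty$-part contributes $\partial = 2S$ along the seam. Meanwhile the affine part $\bar C$, truncated at $|z_2|=R$ and then pushed to the limit $R\to\infty$, contributes boundary $-2S$ by item (c). Hence the chain
\[
[\tilde C] := [\bar C] + [\De] - [\text{outer disk}]
\]
is closed: $\partial[\tilde C] = -2S + 2S = 0$. I would phrase this as: choose a small tubular neighborhood $N(S)$ of $S$ in $\tilde C$, write $\tilde C = (\tilde C\setminus N(S)) \cup N(S)$, note $\tilde C\setminus N(S)$ is a smooth oriented surface-with-boundary (three pieces: the affine part, $\De$ minus a collar, the outer disk minus a collar), and verify that the three boundary circles, each taken with the orientation induced from the chosen orientation of its piece, cancel in $H_1(N(S))\cong H_1(S^1)=\BZ$. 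The counts $-2,+1,+1$ are exactly (c), (b)-inner, and (b)-outer respectively, and they sum to zero. This produces a well-defined class $[\tilde C]\in H_2(\tilde C;\BZ)$ (or in $H_2^{BM}$, then pushed into $H_2$ since $\tilde C$ is compact) whose restriction to the smooth locus is the given mixed orientation.

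The main obstacle is the bookkeeping of orientations at the seam $S$: one has to be careful that "the orientation induced on $\partial(\text{outer disk})$ from the reversed complex orientation" really is $+S$ and not $-S$, and that the limiting cycle in (c) is $-2S$ with respect to the same choice of generator of $H_1(S)$ used for the $L_\infty$ side — a sign error anywhere makes the boundaries add to $\pm 4S$ instead of $0$. I would pin this down by working entirely in the chart $U_2$ with coordinates $(u_0,u_1)$: there $S=\{u_0=0,\ |u_1|=1\}$, the affine surface near $S$ is the graph $u_0 = u_1\bar u_0$ rewritten appropriately, $\De$ and the outer disk are explicit in $u_1$, and the limit computation of (c) is already done there. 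Once all three local boundary classes are expressed as integer multiples of the single generator $[u_1\mapsto e^{i\theta}]$ of $H_1(S)$, the vanishing $-2+1+1=0$ is immediate, and the existence of the fundamental class follows by patching (the complement of $S$ in $\tilde C$ being a smooth oriented $2$-manifold, it has a relative fundamental class, and the computation shows it glues across $S$).
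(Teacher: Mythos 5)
Your proposal is correct and follows essentially the same route as the paper: both decompose $\tilde C$ into $\bar C$ and the two pieces of $L_\infty\setminus S$, and both verify that the boundary contributions along the seam $S$ cancel as $-2S$ (from observation (c)) plus $+S+S=2S$ (from observation (b)). The only difference is cosmetic — the paper phrases the gluing via an explicit triangulation with $S$ as a subcomplex and sums oriented $2$-simplices, while you phrase it via a tubular neighborhood of $S$ and patching of relative fundamental classes — but the key computation $-2+1+1=0$ is identical.
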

To see this, triangulate $\tilde C$ so that $S$ is a union of
 1-simplices.
Then the sum $\omega$ of all two simplices with positive orientation in $L_\infty$ satisfies
$\partial \omega=2 S$ by the observation (b). The sum $\si$ of 2 simplices in 
$\bar C$ satisfies
$\partial \si=-2S$ as we have observed in (c).
Thus $\omega+\si$ is a cycle and it gives the fundamental class.

2. It is possible that a projective mixed curve $D$
with at most isolated singularities  may have some 0-dimensional components. 
The fundamental class $[D]\in H_2(D)$ is the sum of 
2 simplices with positive orientation under  a triangulation
where singular points are vertices.

\begin{Problem} Assume that a projective mixed curve $C$ has at most  1
dimensional singular locus. Does  $C$  have always a fundamental class
as above?
\end{Problem}
\subsubsection{Remark on complex analytic cases}
Assume that $C$ and $C'$ be complex analytic curves. Assume first $P=(\al,\be)\in C\cap C'$
is a transverse intersection where $C,C'$ are non-singular.
Let $J$ be the
complex Jacobian matrix at $P$
\[
 J=\det\left(
\begin{matrix}
\frac{\partial f}{\partial z_1}(\al,\be)&
\frac{\partial f}{\partial z_2}(\al,\be)\\
\frac{\partial g}{\partial z_1}(\al,\be)&
\frac{\partial g}{\partial z_2}(\al,\be)\\
\end{matrix}\right)
\]
Then using the Cauchy-Riemann equality, we can easily show that
\[
 \det\frac{\partial(f_{\BR},f_I,g_{\BR},g_I)}{\partial  (x_1,y_1,x_2,y_2)}(\al,\be)=|J|^2>0.
\]
This implies that the local intersection number 
is 1 if the intersection is transversal at a  regular point $P$.
For a generic case, we have
\[
 I_{top}C,C';P)=\dim_{\BC}\mathcal O_P/(f,g)=I(C,C';P)\in \BN
\]
where $I(C,C';P)$ is the algebraic local intersection multiplicity
and $(f,g) $ is the ideal generated by $f,g$.

 \section{Multiplicity with sign} In this section, we
 consider the special case that 
$C: \hat f(\bfz,\bar\bfz)=0$ is a mixed curve and $C'$ is a complex   line
in $\BC^2$.
So  $\bfz=(z_1,z_2)\in \BC^2$ and  we assume that $g:=z_2$ and $\hat f|_{z_2=0}$ is a mixed polynomial of one
complex variable, $z_1$. Put $f:=\hat f|_{z_2=0}$.
Suppose that $\al\in \BC$ is an isolated mixed root of $f(z_1,\bar z_1)=0$ i.e.,
$f(\alpha,\bar \alpha)=0$ and $f(z_1,\bar z_1)\ne 0$ for any sufficiently 
near $z_1\ne \al$.
 For a positive number $\eps>0$,
we put
\[
 S_\eps^1(\al):=\{z_1\in \BC\,|\,|z_1-\al|=\eps\}.
\]
We define {\em the multiplicity with sign}
 of the root $z_1=\al$ by the mapping degree
of the normalized function
\[
 f/|f|: S^1_\eps(\al)\to S^1,\quad
\bfz\mapsto f(z_1,\bar z_1)/|f(z_1,\bar z_1).
\]
for a sufficiently small $\eps$
and we denote  the  multiplicity with sign by $\sm (f,\al)$.
The mapping degree $\sm(f,\al)$  is also called {\em the rotation number.}
We claim 
\begin{Lemma}
Let $f,\,\hat f $ be as above.  Let $g(\bfz,\bar\bfz)=z_2$.
Let $C=\{\hat f(\bfz,\bar \bfz)=0\} $ and $C'=\{z_2=0\}$.
Let $\al\in \BC$ be a root of $f$ and let $\hat \al=(\al,0)$.
Then $\hat \al\in V(\hat f,g)$ and $I_{top}(C,C';\hat \al)=\sm(f,\al)$.
\end{Lemma}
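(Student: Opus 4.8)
The plan is to unwind Definition \ref{local intersection} into a local mapping degree, strip the $z_2$--dependence off $\hat f$ by a homotopy so that the relevant map becomes a Cartesian product, and then invoke multiplicativity of the local degree under products.

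First I would dispose of the preliminary assertions. Since $g=z_2$ we get $g(\hat\al)=0$, and $\hat f(\al,0,\bar\al,0)=f(\al,\bar\al)=0$, so $\hat\al\in V(\hat f,g)$; moreover $C\cap C'=\{(z_1,0)\mid f(z_1,\bar z_1)=0\}$, so $\hat\al$ is an isolated point of $C\cap C'$ precisely because $\al$ is an isolated root of $f$, and hence $I_{top}(C,C';\hat\al)$ is defined. Writing $z_2=x_2+iy_2$ gives $g_{\BR}=x_2$, $g_I=y_2$, so under the identification $\BR^4=\BC^2$ the map $\vphi=(\hat f_{\BR},\hat f_I,g_{\BR},g_I)$ of Definition \ref{local intersection} becomes
\[
\Phi\colon\BC^2\to\BC^2,\qquad \Phi(z_1,z_2)=\bigl(\hat f(z_1,z_2,\bar z_1,\bar z_2),\,z_2\bigr),
\]
and $I_{top}(C,C';\hat\al)$ is the local degree of $\Phi$ at $\hat\al$, i.e.\ the degree of $\Phi/\|\Phi\|\colon S^3_\eps(\hat\al)\to S^3$ for small $\eps$.

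Next, since $\hat f|_{z_2=0}=f$, every monomial of $\hat f-f$ is divisible by $z_2$ or $\bar z_2$, so $\hat f=f(z_1,\bar z_1)+z_2A+\bar z_2B$ for some mixed polynomials $A,B$. Choosing $\eps$ small enough that $f(z_1,\bar z_1)\ne 0$ on $0<|z_1-\al|\le\eps$, set $\Phi_t(z_1,z_2)=(f(z_1,\bar z_1)+t(z_2A+\bar z_2B),\,z_2)$ for $0\le t\le 1$. On $S^3_\eps(\hat\al)$, $\Phi_t=0$ forces $z_2=0$, hence $z_2A+\bar z_2B=0$ and $f(z_1,\bar z_1)=0$ with $|z_1-\al|=\eps$, which is impossible; so $\Phi_t/\|\Phi_t\|$ is a homotopy on $S^3_\eps(\hat\al)$ from $\Phi/\|\Phi\|$ to $\Phi_0/\|\Phi_0\|$. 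Therefore $I_{top}(C,C';\hat\al)$ equals the local degree at $\hat\al$ of the product map $\Phi_0=h\times\id_{\BC}$, where $h(z_1)=f(z_1,\bar z_1)$ and $\id_{\BC}(z_2)=z_2$.

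To conclude, $h\colon(\BC,\al)\to(\BC,0)$ has local degree $\sm(f,\al)$ by the very definition of the multiplicity with sign, and $\id_{\BC}$ has local degree $1$; multiplicativity of the local degree under Cartesian products (take a common regular value of small perturbations of the two factors: the Jacobian of the perturbed product at a preimage point is block diagonal, so its sign is the product of the signs of the two blocks, and summing over preimages yields $\deg_\al(h)\cdot\deg_0(\id_{\BC})$) then gives $I_{top}(C,C';\hat\al)=\sm(f,\al)$. The monomial decomposition and the non-vanishing of the homotopy are routine; the only delicate point is this last step, specifically checking that the product orientation on $\BR^2\times\BR^2$ agrees with the canonical orientation of $\BC^2$ so that the answer is $+\sm(f,\al)$ and not its negative. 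If one wishes to avoid quoting degree-multiplicativity, this step can instead be carried out by an explicit homotopy on $S^3_\eps(\hat\al)=S^1_\eps(\al)\ast S^1$ deforming $\Phi_0/\|\Phi_0\|$ to the normalization of $((z_1-\al)^{k},z_2)$ (resp.\ $((\bar z_1-\bar\al)^{-k},z_2)$) for $k=\sm(f,\al)$, whose degree is then computed directly; I expect the join--coordinate computation to be the main technical nuisance in that route.
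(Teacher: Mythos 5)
Your proof is correct, and it follows the same overall strategy as the paper: both arguments homotope $\hat f$ back to $f=\hat f|_{z_2=0}$, exploiting the fact that the second component of the map is exactly $z_2$ so that the homotopy cannot vanish away from $z_2=0$, and thereby reduce everything to the local degree of the product map $(f(z_1,\bar z_1),\,z_2)$. The implementations of the final step differ. The paper replaces the round sphere $S^3_\eps(\hat\al)$ by the boundary of a polydisk $\partial(D_{\eps_1}(\al)\times D_{\eps_2})$ with $\eps_2\ll\eps_1$, decomposes it into the two solid tori $D_{\eps_1}(\al)\times S^1_{\eps_2}$ and $S^1_{\eps_1}(\al)\times D_{\eps_2}$, and reads the degree off the Mayer--Vietoris boundary isomorphism onto $H_2(S^1_{\eps_1}(\al)\times S^1_{\eps_2})$, where the normalized map restricts to $(f/|f|)\times \id$. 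You instead stay on the round sphere and invoke multiplicativity of the local mapping degree for Cartesian products, justified by the standard regular-value, block-Jacobian count; the paper's Mayer--Vietoris computation is in effect a proof of that multiplicativity in this special case, so nothing essential is lost by quoting it. You were right to flag the orientation compatibility as the one delicate point: it holds because $(dx_1\wedge dy_1)\wedge(dx_2\wedge dy_2)=dx_1\wedge dy_1\wedge dx_2\wedge dy_2$ is exactly the canonical orientation of $\BC^2$ fixed in Section 2, so the sign comes out as $+\sm(f,\al)$. Your homotopy $\Phi_t$ is in fact slightly cleaner than the paper's, which needs the auxiliary choice of $\eps_2$ to keep $f_t$ nonvanishing on the side $S^1_{\eps_1}(\al)\times D_{\eps_2}$ of the polydisk boundary, whereas on the round sphere the constraint $z_2=0$ does that work for free.
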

\begin{proof}
We 　use the notations: 
\[\begin{split}
&D_{\eps}(\al):=\{z\,|\, |z-\al|\le \eps\},\, S_\eps^1(\al)=\partial  D_\eps(\al),\\
&D_\eps:=D_\eps(0),\,S_{\eps}^1:=S_\eps^1(0).
\end{split}
\]
Put $f_t(\bfz,\bar \bfz)= f(z_1,\bar z_1)+t(\hat f(\bfz,\bar
 \bfz)-f(z_1,\bar z_1))$.
 Note that $f_1=\hat f,\,f_0=f$.
Take a positive number $\eps_1$ small enough so that $0$ is the unique
root of $f(z_1,\bar z_1)=0$ in $D_{\eps_1}(\al)$.
Then take $0<\eps_2\ll \eps_1$ so that $ f_t$ is non-zero on 
$S_{\eps_1}^1(\al)\times D_{\eps_2}$, that is, $f_t(\bfz,\bar \bfz)\ne 0$  if $|z_1-\al|=\eps_1,|z_2|\le \eps_2$.
For the calculation of the mapping degree of  the
normalization $\psi$ of $(\hat f_{\BR}, \hat f_I,g_{\BR},g_I)$, we can use the boundary of 
$\partial(D_{\eps_1}(\al)\times D_{\eps_2})$ in the place of the  sphere  
$S_\eps^3(\hat\al)$.
We use the Mayer-Vietoris exact sequence
of
$\partial(D_{\eps_1}(\al)\times D_{\eps_2})$
associated with the decomposition
$\{D_{\eps_1}(\al)\times S_{\eps_2}^1,\, S_{\eps_1}^1(\al)\times  D_{\eps_2})$.
Then we have the following  commutative diagram where the horizontal arrows are isomorphisms.
\[\begin{matrix}
H_3(\partial(D_{\eps_1}(\al)\times D_{\eps_2}  )&\mapright{\de}&
H_2(S_{\eps_1}^1(\al)\times S_{\eps_2}^1)\\
\mapdown{\psi_*}&&\mapdown{\psi_{*1}'}\\
H_3(\partial(D_{\eps}(\al)\times  D_{\eps_2} ) )&\mapright{\de}&
H_2(S_{\eps_1}^1(\al)\times S_{\eps_2}^1)\\
\end{matrix}
\]
The right vertical map  $\psi_{*1}'$ is 
induced by $\hat f=f_1$ and  $f_1$ is homotopic to $f_0=f$.
Therefore  $\psi_{*1}'$ coincides with
 $(\eps_1f/|f|)_*\times \id$.
The homotopy is given by the normalization of
$(f_t(\bfz,\bar\bfz), g(\bfz,\bar \bfz))$. 
Here the nomalization $\psi_*$ of $f_t(\bfz,\bar\bfz)$ is defined by  $\psi_*(\bfz,\bar\bfz)=\hat\al+(f_t(\bfz,\bar\bfz)-\hat\al)\la$ where $\la$ is the unique positive number so that  the right hand side is in
$\partial(D_{\eps}(\al)\times  D_{\eps_2} ) $.
Thus  we get 
\[\begin{split}
I_{top}(C,C';\hat \al)&=\text{mapping degree of}\,\psi_*\\
&=\text{mapping degree of}\, (\eps_1 f/|f|)_*
=\sm(f,\al).
\end{split}
\]
\end{proof}
 We define
{\em the  total multiplicity with sign}
by the sum of 
$\sm(f,\al)$ for all $\al\in V(f)$ 
where 
$V(f)=\{\al\in \BC\,|\,f(\al,\bar\al)=0\}$
and denote it  by $\SM(f)=\sum_{\al\in V(f)}\sm(f,\al)$.
Note that $\sm(f,\al)$ and    $\SM(f)$  is not necessarily positive  and it 
can be  any  integer.
\subsection{A criterion for the positivity}
Let us study some detail for a simple root $\al\in V(f)$.
First $f(z_1,\bar z_1)$ can be written as a polynomial of $w_1,\bar w_1$ 
with $w_1=z_1-\al$
by the substitution $f_\al(w_1,\bar w_1):=f(w_1+\al,\bar w_1+\bar \al)$.
Put $a:=\frac{\partial f}{\partial z_1}(\al,\bar\al)$ and 
$b:=\frac{\partial f}{\partial \bar z_1}(\al,\bar\al)$.
This implies that 
$L(w_1,\bar w_1)=a\,w_1 +b\, \bar w_1$ is the linear term of $f_\al(w_1,\bar w_1)$.
Put
\[
 a=a_1+ a_2i,\quad b=b_1+b_2i,\quad \al=\al_1+\al_2 i,\quad
a_1,a_2,b_1,b_2,\al_1,\al_2\in \BR.
\]
Then  the expansions of the real polynomials $f_{\BR},f_I$ in 
two real variables
 $(x_\al,y_\al):=(x-\al_1,y-\al_2)$ 
are given as follows:
\[\begin{split}
 f_{\BR}(x_\al,y_\al)&=\Re f(w_1+\al,\bar w_1+\bar \al)\\
& =(a_1+b_1)x_\al+(-a_2+b_2)y_\al+(\text{higher terms})\\
 f_I(x_\al,y_\al)&=\Im f(w_1+\al,\bar w_1+\bar \al)\\
& =(a_2+b_2)x_\al+(a_1-b_1)y_\al+(\text{higher terms}).
\end{split}
\]
Thus we observe that
\[\begin{split}
 \det(\frac{\partial (f_{\BR},f_I)}{\partial(x,y)}(\al_1,\al_2))&=
\left|\left(\begin{matrix} a_1+b_1&- a_2+b_2\\
a_2+b_2&a_1-b_1\end{matrix}\right)
\right|\\
&=(a_1^2+a_2^2)-(b_1^2+b_2^2)=|a|^2-|b|^2.
\end{split}
\]
We say that $\al$ is a {\em positive simple root }
 if $\al$ is a mixed-regular point for $f$
and 
$\sm(f,\al)>0$ 
 which is equivalent to
\[
  \det(\frac{\partial (f_{\BR},f_I)}{\partial(x,y)}(\al_1,\al_2))>0
\]
Similarly  $\al$ is a  {\em negative simple root}
if $\al$ is a mixed-regular point for $f$
and $\sm(f,\al)<0$. Thi sis equivalent to 
\[
 \det(\frac{\partial (f_{\BR},f_I)}{\partial(x,y)}(\al_1,\al_2))<0.
\]
Thus we get 
\begin{Proposition}\begin{enumerate}
\item
$\al$ is a positive (resp. negative) simple root if and only if
$|a|>|b|$. That is
\[\begin{split}
 &\sm(f,\al)>0\iff \left |\frac{\partial f}{\partial z_1}(\al,\bar \al)\right|>
 \left |\frac{\partial f}{\partial\bar z_1}(\al,\bar \al)\right|\\
&\sm(f,\al)<0\iff
 \left |\frac{\partial f}{\partial z_1}(\al,\bar \al)\right|<
 \left |\frac{\partial f} {\partial\bar z_1}(\al,\bar \al)\right|
 \end{split}
\]
\item
If
$\left |\frac{\partial f}{\partial z_1}(\al,\bar \al)\right|=
 \left |\frac{\partial f}{\partial\bar z_1}(\al,\bar \al)\right|$,
 $\al$ is a mixed singularity of $f$.
 \end{enumerate}
 \end{Proposition}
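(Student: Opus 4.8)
The plan is to combine the real Jacobian computation already carried out just before the statement with a linearization argument of the same kind as the one used to show that Definition \ref{local intersection} recovers the topological intersection number. Part (2) is then immediate: by definition $\al$ is a mixed singularity of $f$ exactly when the Jacobian matrix of $(f_\BR,f_I)$ in the real coordinates $(x,y)$ has rank $<2$ at $(\al_1,\al_2)$, i.e. when its determinant vanishes, and the computation above identifies this determinant with $|a|^2-|b|^2$. Hence $|a|=|b|$ forces the Jacobian to be singular, so $\al$ is mixed singular; conversely $\al$ is mixed-regular precisely when $|a|\ne|b|$.

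For part (1), assume $\al$ is mixed-regular, so $|a|\ne|b|$ and the real linear map $L(w_1,\bar w_1)=a\,w_1+b\,\bar w_1$ — which is the linear part of $f_\al(w_1,\bar w_1)=f(w_1+\al,\bar w_1+\bar\al)$ — is invertible. Then $\|L(w_1,\bar w_1)\|\ge c\,|w_1|$ for some $c>0$, while $f_\al-L$ is a sum of terms of order $\ge 2$, so $\|f_\al-L\|=O(\eps^2)$ on $|w_1|=\eps$. Choosing $\eps$ small enough that $\|L\|>\|f_\al-L\|$ on the circle $|w_1|=\eps$, the straight-line homotopy $f_t:=(1-t)f_\al+tL=L+(1-t)(f_\al-L)$ is nowhere zero there, so normalizing it produces a homotopy on $S^1_\eps(\al)$ from $f/|f|$ to $L/|L|$. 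Consequently $\sm(f,\al)$ equals the mapping degree of $L/|L|\colon S^1_\eps(\al)\to S^1$.

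Finally I would compute that degree directly. Writing $w_1=\eps e^{i\theta}$, $0\le\theta\le 2\pi$, we get $L=\eps(a e^{i\theta}+b e^{-i\theta})$; if $|a|>|b|$ then $|a e^{i\theta}|>|b e^{-i\theta}|$ for every $\theta$, so by Rouch\'e's theorem the winding number of $\theta\mapsto a e^{i\theta}+b e^{-i\theta}$ about the origin is that of $\theta\mapsto a e^{i\theta}$, namely $+1$, while if $|a|<|b|$ it is that of $\theta\mapsto b e^{-i\theta}$, namely $-1$. Thus $\sm(f,\al)=\sign(|a|^2-|b|^2)$, which is precisely assertion (1). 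The only step that needs a little care is the homotopy estimate — that for small $\eps$ the higher-order part of $f_\al$ is dominated on $|w_1|=\eps$ by the invertible (hence coercive) linear part — and this is routine; alternatively one may simply invoke the classical fact that the local degree of an orientation-preserving (resp. reversing) local diffeomorphism is $+1$ (resp. $-1$), applied to $(f_\BR,f_I)$ at the mixed-regular point $\al$.
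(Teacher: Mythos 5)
Your proposal is correct and follows essentially the same route as the paper: the paper's argument is exactly the preceding computation identifying $\det\bigl(\partial(f_{\BR},f_I)/\partial(x,y)\bigr)$ with $|a|^2-|b|^2$, combined with the (implicit) standard fact that the local degree at a regular point is the sign of that Jacobian determinant, which settles (1), while (2) is the observation that $|a|=|b|$ forces the determinant, hence the rank, to drop. You merely make explicit the linearization homotopy and the winding-number computation of $\theta\mapsto a e^{i\theta}+b e^{-i\theta}$ that the paper leaves unstated; both of these details are correct.
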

\subsection{Bifurcation}
Suppose that $0$ is an isolated root of  a mixed polynomial $f(u,\bar u)$.
Consider a bifurcation family  $f_t(u,\bar u)=0$ and let $\{P_1(t),\dots, P_\nu(t)\}$
be the roots of $f_t(u,\bar u)=0$ which are bifurcating from $u=0$.
Then we have
\begin{Proposition}
$\sum_{i=1}^\nu \sm(f_t,P_i(t))=\sm(f,0)$. In particular, if the roots $P_i(t)$  are simple,
$\sm(f,0)$ is equal to the difference of  the number of positive roots and the negative roots.
\end{Proposition}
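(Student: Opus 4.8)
The plan is to reduce this one-variable bifurcation statement to the already-established stability result for intersection numbers under bifurcations (Theorem 8 in the excerpt, the ``Stability of the intersection number under a bifurcation''). First I would invoke the Lemma proved just above, which identifies $\sm(f,0)$ with the local topological intersection number $I_{top}(C,C';\hat 0)$, where $C=\{\hat f=0\}$, $C'=\{z_2=0\}$, and $\hat f$ is any mixed polynomial in two variables with $\hat f|_{z_2=0}=f$. The simplest choice is $\hat f(\bfz,\bar\bfz)=f(z_1,\bar z_1)$ itself, so that $C$ is the ``cylinder'' $f(z_1,\bar z_1)=0$ and $C\cap C'$ near the origin is exactly the single point $\hat 0=(0,0)$.

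Next I would transport the one-variable bifurcation $f_t(u,\bar u)$ to a two-variable bifurcation by setting $\hat f_t(\bfz,\bar\bfz):=f_t(z_1,\bar z_1)$ and keeping $g_t=g=z_2$ fixed. The roots $P_1(t),\dots,P_\nu(t)$ of $f_t$ bifurcating from $u=0$ correspond precisely to the intersection points $\hat P_j(t)=(P_j(t),0)$ of $C_t=\{\hat f_t=0\}$ with $C'=\{z_2=0\}$ that bifurcate from $\hat 0$; here one chooses $\eps$ so small that these are the only such intersections in $B_\eps^4(\hat 0)$, which is possible since $0$ is an isolated root of $f$. By Theorem 8 applied to this family (with the second family constant), $I_{top}(C,C';\hat 0)=\sum_{j=1}^\nu I_{top}(C_t,C_{t}';\hat P_j(t))$.

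Finally, I would apply the Lemma again at each bifurcated point: near $\hat P_j(t)$, after the translation $w_1=z_1-P_j(t)$, the pair $(\hat f_t,g)$ is again of the form treated by the Lemma (a mixed polynomial in $z_1$ alone, times the linear form $z_2$), so $I_{top}(C_t,C_t';\hat P_j(t))=\sm(f_t,P_j(t))$. Combining the three displayed equalities gives $\sm(f,0)=\sum_{j=1}^\nu\sm(f_t,P_j(t))$. For the ``in particular'' clause, when each $P_j(t)$ is a simple root, Proposition 14 (or rather the criterion just before it) says $\sm(f_t,P_j(t))=\pm1$ according to whether $|\partial f_t/\partial z_1|\gtrless|\partial f_t/\partial\bar z_1|$ at $P_j(t)$, i.e. according to the sign of the real Jacobian; hence the sum counts (number of positive roots) minus (number of negative roots).

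The main obstacle is purely bookkeeping: one must check that the hypotheses of Theorem 8 are genuinely met, namely that $\eps$ and the bifurcation parameter $\gamma$ can be chosen uniformly so that $C_t\cap C'\cap S_\eps^3(\hat 0)=\emptyset$ for all small $t$ and so that all bifurcated intersection points lie inside $B_\eps^4(\hat 0)$ --- but this is exactly the setup already assumed in the statement of the one-variable bifurcation (``$\{P_1(t),\dots,P_\nu(t)\}$ the roots bifurcating from $u=0$''), so it transfers without difficulty. A minor point is that the Lemma as stated uses the \emph{constant} family $g=z_2$; since we never perturb $g$, no extra transversality in the $z_2$-direction is needed, and the non-vanishing of $f_t$ on the relevant sphere is the only genericity condition, which follows from isolatedness of the root $u=0$ of $f$.
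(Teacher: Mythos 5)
Your argument is correct, but it is not the route the paper takes. The paper's proof of this Proposition is a one-line remark --- ``the proof is similar with that of Theorem 4'' --- meaning one simply repeats the bifurcation-stability degree argument one dimension down, directly in the $u$-plane: $f_t/|f_t|:S^1_\eps(0)\to S^1$ is non-vanishing for small $|t|$, so its degree is independent of $t$; and for fixed small $t$ the normalized map extends over $D_\eps(0)\setminus\bigcup_i \mathrm{Int}\,D_r(P_i(t))$, so the degree on the outer circle equals the sum of the local degrees $\sm(f_t,P_i(t))$. You instead lift everything to $\BC^2$ via the cylinder $\hat f_t(\bfz,\bar\bfz)=f_t(z_1,\bar z_1)$ and the fixed line $z_2=0$, use the Lemma identifying $\sm(f,\al)$ with $I_{top}(C,C';\hat\al)$ at both ends, and quote the two-variable stability theorem in the middle. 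This works and has the virtue of reusing proved results rather than re-running the argument, but two cautions are in order. First, the stability theorem as \emph{stated} assumes the bifurcated intersections are smooth and transverse, whereas the first sentence of the Proposition does not assume the $P_i(t)$ simple; you must invoke the additivity established in the \emph{proof} of that theorem (the fundamental-class argument on $B^4_\eps(P)$ minus small balls), which needs no transversality, rather than the statement itself. Second, when $0$ is a mixed singular point of $f$, the cylinder $\{f(z_1,\bar z_1)=0\}$ has a whole line of mixed singular points through $\hat 0$, so the standing hypothesis ``at worst an isolated mixed singularity'' in the definition of $I_{top}$ is technically violated; this is harmless because the local mapping degree only requires $\hat 0$ to be an isolated point of $C\cap C'$, but it should be acknowledged. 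The direct one-variable argument the paper intends avoids both issues and is shorter; your reduction buys uniformity with the two-variable theory at the cost of these bookkeeping checks.
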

The proof is similar with that of  Theorem 4. Note that $\nu$  depends on the chosen bifurcation.

\begin{Example} 
1. Let $f(u,\bar u)=u^2 \bar u$.  It is easy to see that 
$u=0$ is a non-simple singularity and $\sm(f,0)=1$.
(For a complex  polynomial singularity, $\sm(f,0)=1$ implies that $0$ is a simple root.)
Consider two bifurcation families:
\[\begin{split}
f_t(u,\bar u)=(u^2-t) \bar u,\quad g_s(u,\bar u)=u(u\bar u+s)\quad
\text{for}\,\,t,s\ge 0.
\end{split}
\]
Note that $f_t=0$ has two positive roots $u=\pm \sqrt t$ and a negative root $u=0$.
$g_s=0$ has only one positive root  $u=0$ for $s>0$.

\begin{Assertion} \label{assertion}
Let $f(u,\bar u)=u^n+u+\bar u$ for any $n\ge 2$.  Then 
\[
 \sm(f,0)=\begin{cases}  1\quad & n:\,  \text{even}\\
                       -1\quad &n\equiv 3\mod 4\\
                        1\quad &n\equiv 1\mod 4
\end{cases}
\]
\end{Assertion}
\end{Example}
For the proof, show the Appendix (\S \ref{Appendix} ).

\subsection{Admissible mixed polynomial and the main theorem}
We consider a mixed polynomial 
$f(u,\bar u)=\sum_{\nu,\mu}c_{\nu,\,u}
u^\nu\bar u^\mu$ of one variable $u$.
The {\em  maximal  degree} of $f$ is defined by 
$\bar d=\max\{\nu+\mu\,|\, c_{\nu,\mu\ne 0}\}$. 
We denote $\bar d=\bar d(f)$.
Similarly we define the {\em minimal degree} of $f$ at the origin
by ${\underline d}:=\min\,\{\nu+\mu\,|\, c_{\nu,\mu\ne 0}\}$.
and we denote ${\underline d}=\underline{d}(f)$.
Note that the minimal degree is a local invariant but the maximal degree
is a global invariant. That is, $\bar d(f)$ is invariant under the
parallel
change of coordinate $v=u-a$.
%
For a positive integer $\ell$,  we put
  \[
  f_\ell(u,\bar u):=\sum_{\nu+\mu=\ell}c_{\nu,\mu}u^\nu\bar u^\mu.
  \]
Then we can write 
\[\begin{split}
 f(u,\bar u)&=f_{\bar d}(u,\bar u)+f_{\bar d-1}(u,\bar u)+\cdots+
f_{\underline d+1}(u,\bar u)+f_{\underline d}(u,\bar u)\\
&=f_{\bar d}(u,\bar u)+k(u,\bar u),\quad\\
&=f_{\underline d}(u,\bar u)+j(u,\bar u)
\end{split}
\]
with $\bar( k)<\bar d$ and $\underline{d}(j)>\underline d$. Note that we have  a unique 
 factorization of $f_{\bar d}$ and $f_{\underline d}$ as follows.
\begin{eqnarray}
 f_{\bar d}(u,\bar u)&=c u^{ p}\bar u^{ q}\prod_{j=1}^{s}(u+\ga_j \bar
 u)^{\nu_j},\quad
p+q+\sum_{j=1}^{s}\nu_j=\bar d,\,c\in \BC^*\label{fac1}\\
f_{\underline d}(u,\bar u)&=c' u^{a}\bar u^{b}
\prod_{j=1}^{s'}(u+{\de}_j {\bar  u})^{{\mu}_j},\quad
{a}+{b}+\sum_{j=1}^{s'}{\mu}_j={\underline d},\,c'\in \BC^*\label{fac2}
\end{eqnarray}
where $\ga_1,\dots, \ga_{s}$ (respectively 
${\de}_1,\dots, {\de}_{s'}$)
are mutually distinct non-zero complex numbers.
We say that $f$ is {\em admissible at infinity}
(respectively {\em admissible at the origin}) if
$|\ga_j|\ne 1$ for $j=1,\dots, s$ (resp.  $|{\de}_j|\ne 1,j=1,\dots, {s'}$). 
For non-zero complex number $\xi$, we put 
\[\eps(\xi)=\begin{cases}
1\quad &|\xi|<1\\0\quad& |\xi|=1\\-1\quad &|\xi|>1\end{cases}
\]
and we consider the following  integers:
\[
\be(f):= p- q+\sum_{j=1}^{s} \eps(\ga_j)\nu_j,\quad
\rho(f,0):= {a}-{b}+\sum_{j=1}^{s'} {\eps}(\de_j)\mu_j.
\]

Our main result is the following.
\begin{Theorem} \label{main}
\begin{enumerate}
\item
Assume that $f(u,\bar u)$ be an admissible mixed polynomial
 at infinity.
 Then
$\SM(f)=\be(f)$.
\item
Assume that $f(u,\bar u)$ be an admissible mixed polynomial
 at the origin.
 Then
$\sm(f,0)=\rho(f,0)$.
\end{enumerate}
\end{Theorem}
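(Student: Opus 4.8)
The plan is to reduce the general (admissible) case to the two computations we already understand: the rotation number of a monomial $u^\nu\bar u^\mu$ and the rotation number of a linear mixed form $u+\gamma\bar u$. For part (2), I would first observe that since $f$ is admissible at the origin, the lowest-degree part $f_{\underline d}$ has no factor $u+\delta\bar u$ with $|\delta|=1$, so $f_{\underline d}(u,\bar u)\ne 0$ for $u\ne 0$ near $0$. Hence on a small circle $S^1_\eps(0)$ the homotopy $f_{\underline d}+t\,j$, $0\le t\le 1$, stays nonvanishing (here $j=f-f_{\underline d}$ has strictly higher minimal degree, so $\|j\|\ll\|f_{\underline d}\|$ on $S^1_\eps$ for $\eps$ small). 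Therefore $\sm(f,0)=\sm(f_{\underline d},0)$, i.e. the multiplicity with sign depends only on the lowest-degree form.

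Next I would compute $\sm$ of the homogeneous form $f_{\underline d}=c'u^a\bar u^b\prod_j(u+\delta_j\bar u)^{\mu_j}$. Since $f/|f|$ is multiplicative, the rotation number is additive over factors: $\sm(f_{\underline d},0)=\sm(u^a,0)+\sm(\bar u^b,0)+\sum_j\mu_j\,\sm(u+\delta_j\bar u,0)$ (the nonzero constant $c'$ contributes $0$). On $S^1_\eps(0)$ parametrized by $u=\eps e^{i\theta}$ we have $u^a/|u^a|=e^{ia\theta}$, giving degree $a$, and $\bar u^b/|\bar u^b|=e^{-ib\theta}$, giving degree $-b$. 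For the linear factor, $u+\delta\bar u=\eps(e^{i\theta}+\delta e^{-i\theta})$; writing $\delta=re^{i\alpha}$ one checks the curve $\theta\mapsto e^{i\theta}+re^{-i\theta}$ winds once counterclockwise around $0$ when $r<1$ and once clockwise when $r>1$ (it degenerates to a segment through $0$ when $r=1$, which is exactly the excluded case). Hence $\sm(u+\delta\bar u,0)=\eps(\delta)$. Summing, $\sm(f_{\underline d},0)=a-b+\sum_j\eps(\delta_j)\mu_j=\rho(f,0)$, which is part (2).

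For part (1), the idea is the same but "at infinity" instead of "at the origin": the total multiplicity $\SM(f)=\sum_{\alpha\in V(f)}\sm(f,\alpha)$ should be read off the top-degree form $f_{\bar d}=cu^p\bar u^q\prod_j(u+\gamma_j\bar u)^{\nu_j}$. Here I would use the global/bifurcation machinery: by Proposition (the bifurcation statement) and the argument behind Theorem 4, $\SM(f)$ is a bifurcation invariant, so I may replace $f$ by a convenient generic perturbation with the same top-degree form; alternatively, and more directly, $\SM(f)$ equals the mapping degree of $f/|f|\colon S^1_R\to S^1$ for $R$ large (all roots of $f$ lie inside $|u|=R$, so the degree over the big circle counts the total rotation number, just as in the global intersection Theorem). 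On $S^1_R$ with $R\to\infty$ the homotopy $f_{\bar d}+t\,k$, $k=f-f_{\bar d}$ of strictly lower maximal degree, is nonvanishing because admissibility at infinity gives $f_{\bar d}(u,\bar u)\ne 0$ for $u\ne 0$ and $\|k\|\ll\|f_{\bar d}\|$ on $S^1_R$. Hence $\SM(f)=\deg(f_{\bar d}/|f_{\bar d}|)=p-q+\sum_j\eps(\gamma_j)\nu_j=\be(f)$ by the same monomial/linear-factor computation.

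The routine parts are the two elementary winding-number computations (for $u^a\bar u^b$ and for $u+\delta\bar u$), which I have sketched above and would only spell out briefly. The main obstacle is justifying the reductions cleanly: (i) that for $\eps$ small the lower-order remainder $j$ is genuinely dominated by $f_{\underline d}$ on $S^1_\eps$ and the straight-line homotopy avoids $0$ — this uses that $\underline d(j)>\underline d$ together with compactness of the circle, and needs the admissibility hypothesis precisely to rule out zeros of $f_{\underline d}$ on $S^1_\eps$; and (ii), for part (1), identifying $\SM(f)$ with the mapping degree over a large circle, i.e. that no rotation "escapes to infinity" — this is where I expect to lean on the bifurcation invariance and the fact, already used in the global intersection theorem, that the degree over $S^1_R$ splits as the sum of local rotation numbers of the enclosed roots. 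Once those two points are secured, everything else is the multiplicativity of $f/|f|$ and the two model computations.
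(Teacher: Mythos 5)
Your proposal is correct and follows essentially the same route as the paper: identify $\SM(f)$ (resp.\ $\sm(f,0)$) with the degree of $f/|f|$ on a large (resp.\ small) circle, use admissibility to run the straight-line homotopy from $f$ to its extremal-degree form $f_{\bar d}$ (resp.\ $f_{\underline d}$) without vanishing, and then count the contribution of each factor, with $u+\ga\bar u$ contributing $\eps(\ga)$. The only cosmetic difference is in the last step: you compute the winding of each linear factor directly from the ellipse parametrization and invoke multiplicativity, whereas the paper absorbs a $u$ or $\bar u$ from each factor into a monomial $u^{\hat p}\bar u^{\hat q}$ and homotopes the remaining factors, which stay in disks avoiding the origin, to constants --- the two computations are equivalent.
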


\begin{proof}
Put $\bar d=\bar d(f)$ and assume that $f_{\bar d}$ is factored as in (\ref{fac1}).
In the case
$s=0$, the proof is the same  with that  of Theorem 11, \cite{MC}.
In the general case, we first assume  that 
\[
 |\ga_1|\le \cdots\le |\ga_\ell|<1<|\ga_{\ell+1}|\le\dots\le |\ga_s|.
\]
Let $R$ be a positive number. 
First we observe that for any $u\in S_R^1$,
\[\begin{split}
 |f_{\bar d}(u,\bar u)|&=|c| R^{\bar d} \prod_{j=1}^\ell|1+\ga_j\bar u/u|^{\nu_j}
\prod_{j=\ell+1}^s|u/\bar u+\ga_j|^{\nu_j}\\
&\ge |c| R^{\bar d} \prod_{j=1}^\ell(1-|\ga_j|)^{\nu_j}
\prod_{j=\ell+1}^{s}(|\ga_j|-1)^{\nu_j}\\
&\ge MR^{\bar d}
\end{split}
\]
for some positive constant $M>0$.
We can choose a sufficiently large $R>0$ so that 
\[
 |f_{\bar d}(u,\bar u)|> 2|k(u,\bar u)|,\quad \forall u,\, |u|\ge R.
\]
The rest of the argument is  exactly  same  as the proof of
Theorem 11, \cite{MC}.
Let $V(f)=\{\al_1,\dots, \al_m\}$  and take a small positive number 
$\eps$ so that $D_\eps(\al_j)\cap V(f)=\{\al_j\}$ where 
$D_\eps(a):=\{u\,|\, |u-a|\le \eps\}$.
First, as $f/|f|:S_R^1\to S^1$ is extended to $D_R(O)\setminus
 \bigcup_{i=1}^m D_\eps(\al_j)$,
we have 
\[
 \text{mapping degree }
(f/|f|: S_R^1\to S^1 ) =\sum_{j=1}^m\sm(f,\al_j).
\]
To compute the mapping degree $f/|f|: S_R^1\to S^1$,
we consider the family of polynomials
$f(u,\bar u,t):=f_{\bar d}(u,\bar u)+(1-t)k(u,\bar u)$.
This family is non-vanishing on $S_R^1$.
Note that $f(u,\bar u,0)=f(u,\bar u)$ and $f(u,\bar u,1)=f_{\bar d}(u,\bar u)$.
As $f/|f|\simeq f_{\bar d}/|f_{\bar d}|$ on $S_R^1$, we have
\[\begin{split}
 \sum_{j=1}^m\sm(f,\al_j)&=\text{mapping degree of }
\, f/|f|: S_R^1\to S^1\\
&= \text{mapping degree of } \, f_{\bar d}/|f_{\bar d}|.
\end{split}
\]
Now we will show that  the mapping degree of $f_{\bar d}/|f_{\bar d}|$ is  equal to the
 integer $\be(f)$. For this purpose, 
we write $f_{\bar d}$ as
\[
 \begin{split}
f_{\bar d}(u,\bar u)&=u^{\hat p}
\bar u^{\hat q}
\prod_{j=1}^\ell(1+\ga_j\frac{\bar u}u)^{\nu_j}
\prod_{k=\ell+1}^s(\frac{ u}{\bar u}+\ga_j)^{\nu_k}\\\text{where}\,\,
\hat p&= p+\sum_{j=1}^\ell \nu_j,\quad 
\hat q= q+\sum_{j=\ell+1}^s \nu_j.
\end{split}
\]
Note that 
\[
 \be(f)=\hat p- \hat q= p- q+\sum_{j=1}^\ell \nu_j-\sum_{j=\ell+1}^{s} \nu_j
\]
 in the above notation.
We observe that 
\[\begin{split}
& 1+\ga_j\frac{\bar u}u\in D_{|\ga_j|}(1),\quad 1\le j\le \ell,\,\,u\in S_R^1\\
&\frac u{\bar u}+\ga_k\in D_1(\ga_k),\quad \ell+1\le k\le s,\,\,u\in S_R^1
\end{split}
\]
where
$D_\eps(\eta)=\{\zeta\in \BC\,|\,|\zeta-\eta|<\eps\}$.
It is easy to observe that
\[
0\notin  D_{|\ga_j|}(1)\,(j\le \ell),\,\quad 0\notin  D_{1}(\ga_k)\, (k\ge \ell+1).
\]
Consider the family of polynomials
\[
  f_{\bar d}(u,\bar u,t):=u^{\hat p}\bar u^{\hat q}\prod_{j=1}^{s}(1+t \ga_j\frac{\bar u}u)^{\nu_j}
\prod_{k=s+1}^{s}(t\frac{ u}{\bar u}+ \ga_j)^{\nu_k},\,0\le t\le 1.
\]
Note that $ f_{\bar d}(u,\bar u,1)=f_{\bar d}(u,\bar u)$ and
$ f_{\bar d}(u,\bar u,0)=u^{\hat p}\bar u^{\hat q}$.
As $ f_{\bar d}(u,\bar u,t),\,0\le t\le 1$ give a homotopy on $S_R^1$, the assertion follows
 from the fact that the mapping degree of 
$ u^{\hat p}\bar u^{\hat q}   $ is $\be(f)$. This proves the first assertion (1).

The second assertion (2) is proved by the same argument:
\nl
--Take a sufficiently small $r>0$ so that 
\[|f_{\underline d}(u,\bar u)|\ge 2 |j(u,\bar u)|, \quad \forall  u,\,|u|\le r
\]
where $f=f_{\underline d}+j$.
\nl
--Observe that the homotopy
$\underline{f}(u,\bar u,t)=f_{\underline d}(u,\bar u)+t j(u,\bar u),\,0\le t\le 1$
is non-vanishing on the circle $S_{r}^1$.
\nl
--The normalization  $f_{\underline d}/|f_{\underline d}|$ 
of $f_{\underline d}(u,\bar u,0)$ is homotopic to  that of
$u^{\rho(f,0)}$.
\end{proof}
\subsection{Compactification}
Suppose that we are given a mixed polynomial
$f(u,\bar u)=\sum_{\nu,\mu} c_{\nu,\mu}\,u^\nu\bar u^\mu$.
Let $\bar d=\overline{\deg}\, f$ and put
\[
 d_+=\max\,\{\nu\,|\, c_{\nu,\mu}\ne 0\},\,\,
 d_-=\max\,\{\mu\,|\, c_{\nu,\mu}\ne 0\}.
\]
Define
\[
 F(z_0,z_1,\bar z_0,\bar z_1):=z_0^{d_+}\bar z_0^{d_-}
f(z_1/z_0, \bar z_1/\bar z_0).
\]
$F(z_0,z_1,\bar z_0,\bar z_1)$ is the mixed
homogenization defined in \S 1.
Put $d_{h}=d_++d_-$ and
 $q_{h}=d_+-d_-$.
By the definition, we have the following assertion.
\begin{Proposition} Assume that $f_{\bar d}(u,\bar u)$  be factorized as (2) and let 
 $F(z_0,z_1,\bar z_0,\bar z_1)$ be as above.
$F$ is a strongly polar homogeneous polynomial of 
radial degree $d_{h}$ and  polar degree $q_{h}$ and we have the inequality
 $d_{h}\ge\bar d= {\overline{\deg} }\,f$.
\begin{enumerate}
\item
The equality  $d_{h}=\bar d$ holds if and only if
\[p=d_+,\,\,q=d_-,\, s=0.
\]
\item Assume that $d_{h}>\bar d$. Then $(0:1)\in V(F)$. Namely each monomial
in $F(z_0,z_1,\bar z_0,\bar z_1)$ contains either $z_0$ or $\bar z_0$.
\end{enumerate}
\end{Proposition}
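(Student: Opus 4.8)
The plan is to reduce everything to bookkeeping with the monomial expansion of $F$. First I would expand the definition to
\[
F(z_0,z_1,\bar z_0,\bar z_1)=\sum_{c_{\nu,\mu}\ne 0}c_{\nu,\mu}\,z_0^{\,d_+-\nu}\,\bar z_0^{\,d_--\mu}\,z_1^{\nu}\,\bar z_1^{\mu},
\]
which is a genuine polynomial precisely because $\nu\le d_+$ and $\mu\le d_-$ for every term occurring in $f$ (by the definition of $d_\pm$). Each monomial here has degree $d_+$ in the holomorphic variables $(z_0,z_1)$ and degree $d_-$ in the anti-holomorphic variables $(\bar z_0,\bar z_1)$, hence radial degree $d_++d_-=d_h$ and polar degree $d_+-d_-=q_h$; this is the one-variable version of the computation recalled in \S2 and shows that $F$ is strongly polar homogeneous of radial degree $d_h$ and polar degree $q_h$. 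The inequality $d_h\ge\bar d$ is then immediate, since $\nu+\mu\le d_++d_-$ for every term of $f$, so $\bar d=\max\{\nu+\mu\mid c_{\nu,\mu}\ne0\}\le d_h$.

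The key observation I would isolate is: $d_h=\bar d$ holds if and only if $c_{d_+,d_-}\ne0$. Indeed, a term $c_{\nu,\mu}u^{\nu}\bar u^{\mu}$ with $\nu+\mu=d_++d_-$ and $\nu\le d_+$, $\mu\le d_-$ must satisfy $\nu=d_+$, $\mu=d_-$; conversely, if $c_{d_+,d_-}\ne0$ the monomial $u^{d_+}\bar u^{d_-}$ has degree $d_h$, which together with $\bar d\le d_h$ forces $\bar d=d_h$. Granting this, I would derive (1) as follows. If $d_h=\bar d$, then every term of $f_{\bar d}$ has $\nu+\mu=d_++d_-$ with $\nu\le d_+$, $\mu\le d_-$, hence is a multiple of $u^{d_+}\bar u^{d_-}$; so $f_{\bar d}=c_{d_+,d_-}u^{d_+}\bar u^{d_-}$ is a single monomial, and comparison with the factorization $f_{\bar d}=c\,u^{p}\bar u^{q}\prod_{j=1}^{s}(u+\ga_j\bar u)^{\nu_j}$ of \eqref{fac1} forces $s=0$, $p=d_+$, $q=d_-$. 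Conversely, if $p=d_+$, $q=d_-$, $s=0$, then $f_{\bar d}=c\,u^{d_+}\bar u^{d_-}$, whose degree is $d_++d_-=d_h$; since $\bar d=\deg f_{\bar d}$ by definition, $d_h=\bar d$.

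For (2), I would argue that if $d_h>\bar d$ then $c_{d_+,d_-}=0$ by the same observation, so no monomial in the expansion of $F$ above has simultaneously $\nu=d_+$ and $\mu=d_-$; equivalently every monomial of $F$ has $d_+-\nu>0$ or $d_--\mu>0$, i.e.\ is divisible by $z_0$ or by $\bar z_0$. Substituting $(z_0,z_1,\bar z_0,\bar z_1)=(0,1,0,1)$ then annihilates every monomial, so $F(0,1,0,1)=0$, that is $(0:1)\in V(F)$.

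I do not anticipate a real obstacle: the whole thing is a degree count built around the single point that $d_h=\bar d$ is equivalent to $u^{d_+}\bar u^{d_-}$ actually occurring in $f$. The only place calling for a little care is the translation between non-vanishing of the leading coefficient $c_{d_+,d_-}$ and the shape of the homogeneous top part $f_{\bar d}$ in the factorized form \eqref{fac1}, together with the remark that any factor $(u+\ga_j\bar u)$, or any drop of $p$ below $d_+$ or of $q$ below $d_-$, already forces the strict inequality $d_h>\bar d$.
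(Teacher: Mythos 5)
Your argument is correct and is exactly the verification the paper has in mind: the paper offers no written proof beyond ``By the definition, we have the following assertion,'' and your monomial bookkeeping (each term of $F$ has holomorphic degree $d_+$ and anti-holomorphic degree $d_-$, and $d_h=\bar d$ iff $c_{d_+,d_-}\ne 0$, which via the uniqueness of the factorization \eqref{fac1} is equivalent to $p=d_+$, $q=d_-$, $s=0$) is precisely that definition unpacked. Nothing is missing; the observation that a nontrivial factor $(u+\ga_j\bar u)^{\nu_j}$ with $\ga_j\ne 0$ contributes at least two distinct monomials is the only point needing care, and you address it.
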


\subsubsection{Generic line at infinity and a generic affine chart } 
Let $F(\bfz)$ be a strongly polar homogeneous polynomial of two variables 
$\bfz=(z_0,z_1)$ of radial and polar degree $d$ and $q$. We can write
$2r=d-q$
for some integer $r\ge 0$.
If the variable $z_0$ is generic (i.e., there is a monomial which does
not contain $z_0$ and $\bar z_0$), in the affine coordinate $U_0$,
$V(F)\cap U_0$ is defined by  $f(u,\bar u):=F(1,u,1,\bar u)$
and we can write
\[
 f_d(u,\bar u)=c u^{q+r}\bar u^r,\,\, f=
 f_d+\text{(lower terms)},\,\,u=z_1/z_0.
\]
In this case, we have shown that $\SM(f)=q$ in Theorem 11,
\cite{MC}.
Thus Theorem 11 \cite{MC} is a special case of Theorem \ref{main}.

\subsubsection{Example} Consider the  polynomial:
\[
 f(u,\bar u)=u^2\bar u(u-2\bar u)+1.
\]
$V(k)$ consists of 4 points
\[
 u=\pm \root{4}\of {1/3} i,\,\, \pm 1.
\]
The  multiplicities with sign of the first two roots
$\{\pm \root{4}\of {1/3} i\}$ are 1 and the latter two 
roots $\{\pm 1\}$ are -1. This implies that 
$\SM(f)=0$ as Theorem \ref{main} asserts.

The mixed  homogenization $f(\bfz,\bar \bfz)$ is given by
\[
 F(\bfz,\bar \bfz)=z_1^2\bar z_1(z_1\bar z_0-2 \bar z_1 z_0)+ z_0^3\bar z_0^2.
\]
We see that $f(\bfz,\bar \bfz)$ is a strongly polar homogeneous
polynomial
of radial and polar degrees $5$ and 1 respectively.
We observe that $(0:1)$ is on $V(f)$
and it has multiplicity with sign 1.  Now take the generic affine coordinate chart
$U_1:=\{z_1\ne 0\}$ with the coordinate $v=z_0/z_1$.
Then the affine equation of $V(f)\cap U_1$ is given as

\[
 f'(v)=\bar v-2 v+v^3\bar v^2
\]
with $5$ points.
 Note that $\sm(f',0)=1$.

\subsubsection{Appendix: Proof of Assertion
 }\label{Appendix}
Recall $f(u,\bar u)=u^n+u+\bar u$.
The proof follows the following observations.

1. $\SM(f)=n$ by Theorem \ref{main}.

2. For any $\al\in V(f)\setminus \{0\}$, $\al$ is a simple mixed root
 with
$\sm(f,\al)=1$.

3. The number, say $\beta$,  of non-zero mixed roots  
 of $f$ is given as follows:
\[
 \beta=\begin{cases}
n-1\quad &n\, \text{even}\\
n+1\quad &n\equiv 3\mod 4\\
n-1\quad &n\equiv 1\mod 4\\
\end{cases}
\]
Let us show the observation 2. So assume that $\al\in V(f)$ and $\al\ne
 0$.
Take the coordinate $v:=u-\al$.
Then
\[\begin{split}
 f(v+\al)&=\al^n+\al+\bar\al+n\al^{n-1}v+v+\bar v+\text{(higher
   terms in $ v)$}\\
&=(n\al^{n-1}+1)v+\bar v+\text{(higher terms in  $v$)}\\
&=(-(n-1)-n\frac{\bar \al}{\al}) v+\bar v+\text{(higher terms in $v$)}\\
\end{split}
\]
Now we conclude the assertion by Proposition 9 as
\[
|(n-1)-n\frac{\bar \al}{\al}|\ge n-(n-1)=1
\] and by the equality
takes place if and only if 
$\bar \al=-\al$, that is $\al$ is purely imaginary. This does not happen by the following calculation.

Now we show the observation 3. As the calculation is easy,
 we only show the result. 
Assume  $f(u)=0$ with $u\ne 0$.
Put  $u=r\exp(i a), \,0\le a< 2\pi$ in the polar coordinates.
Then we have
\[
 r^n\sin(na)=0,\quad r^{n}\cos(na)+2r\cos (a)=0.
\]
Thus the first equality says that 
\[
 na=j\pi,\,\,j=0,\dots, 2n-1
\]
The second equality has a positive solution
for $r$ if and only if
$\cos(na)\cos (a)<0$. This implies that $\al$ is not a pure imaginary complex number.
Assume  $n=4k$ for example. Then the solution exists for the following.
\[\begin{split}
  \frac{a}{\pi}&=
\{1,3,\dots, 2k-1,2k+2,2k+4,\dots, 6k-2,6k+1,\dots, 8k-1\}\\
\beta&=4k-1,\,\,\sm(f,0)=4k-\beta=1\end{split}
\]
For the case $n=4k+2$,
\[\begin{split}
  \frac{a}{\pi}&=
\{1,3,\dots, 2k-1,2k+2,2k+4,\dots, 6k+2,6k+5,\dots, 8k+3\}\\
\beta&=4k+1,\,\,\sm(f,0)=4k+2-\beta=1\end{split}
\]
For the case $n=4k-1$, we have
\[\begin{split}
  \frac{a}{\pi}&=
\{1,3,\dots, 2k-1,2k,2k+2,\dots, 6k-2,6k-1,\dots, 8k-3\}\\
\beta&=4k,\,\,\sm(f,0)=4k-1-\beta=-1\end{split}
\]
For $n=4k+1$, we have 
\[\begin{split}
  \frac{a}{\pi}&=
\{1,3,\dots, 2k-1,2k+2,2k+4,\dots, 6k,6k+3,\dots, 8k+1\}\\
\beta&=4k,\,\,\sm(f,0)=4k+1-\beta=1\end{split}.
\]
\subsection{Figure}Let us consider the case $n=2$, $f(u)=u^2+u+\bar u$.
Note that $f(u)$ has two mixed singular points, $O$ and
$P=(-2,0)$.

The following figures shows  the trace of $f(u(\theta),\bar u(\theta))$,
$u(\theta)=r\exp(i\theta),\,0\le \theta\le 2\pi$ for $r=3/2,2,3$
respectively.

\nin
Case $r=3/2$: The Figure 1 shows that $\sm(f,0)=1$.

\vspace{1cm}
\begin{figure}[here]
{\includegraphics[width=5cm,height=5cm]{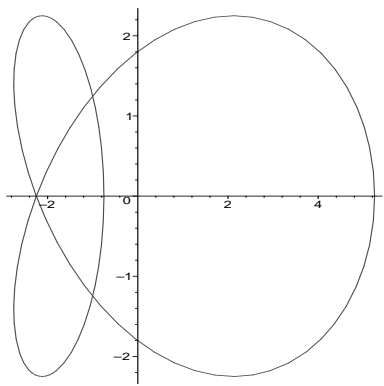}}
\caption{\label{r1}$n=2,\,r=3/2$}
\end{figure}

\nin
Case $r=2$: Figure corresponds the critical case that $|u|=2$ passes
through the mixed singular point $(-2,0)$.
\begin{figure}[here]
\includegraphics[width=5cm,height=5cm]{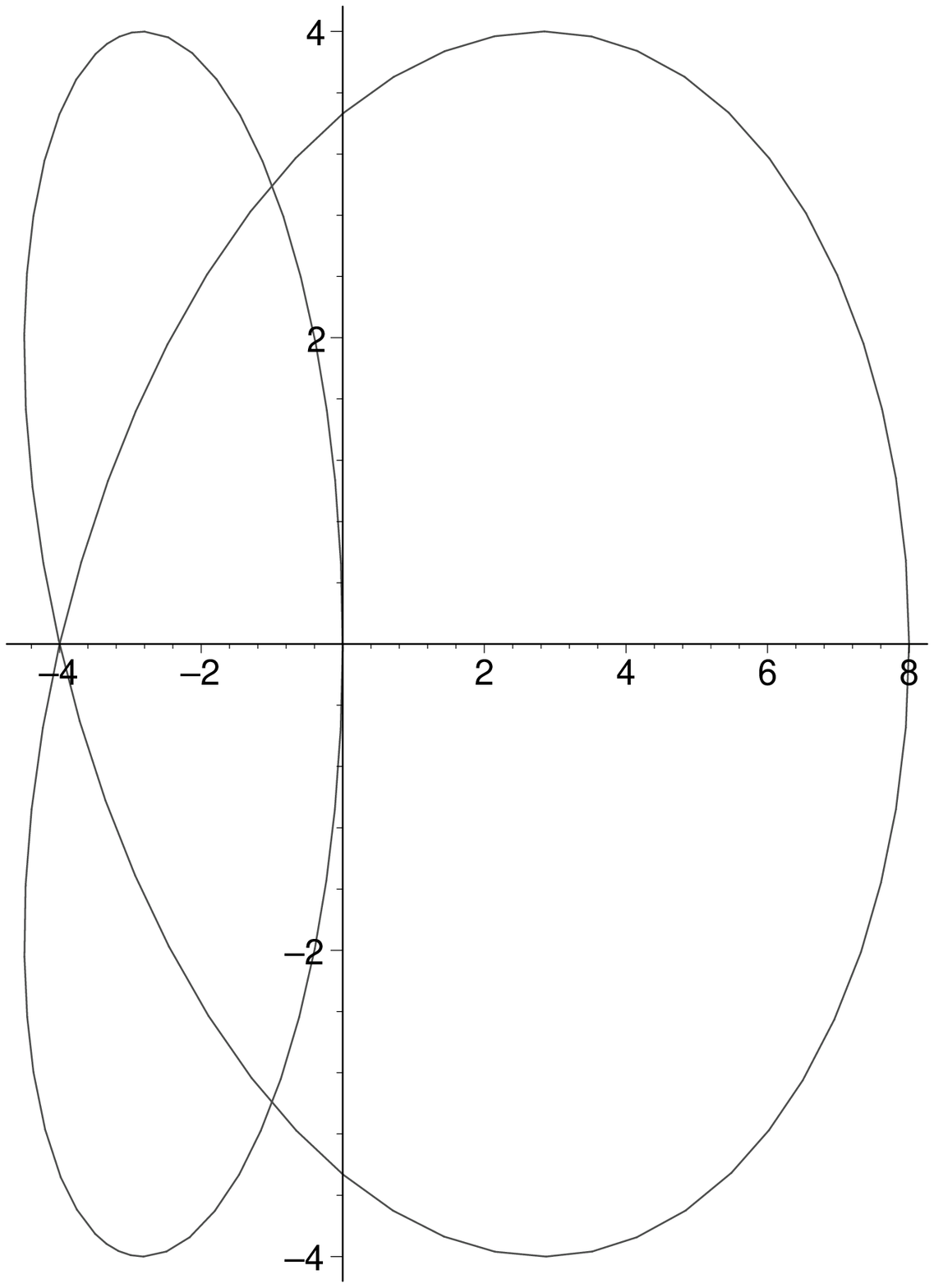}
\caption{\label{r2}$n=2,\, r=2$}
\end{figure}

\newpage\nin
Case $r=3$. The disk $|u|\le 3$ contains mixed singular point $(-2,0)$
and
$\SM(f)=2$.
\begin{figure}[here]
\includegraphics[width=5cm,height=5cm]{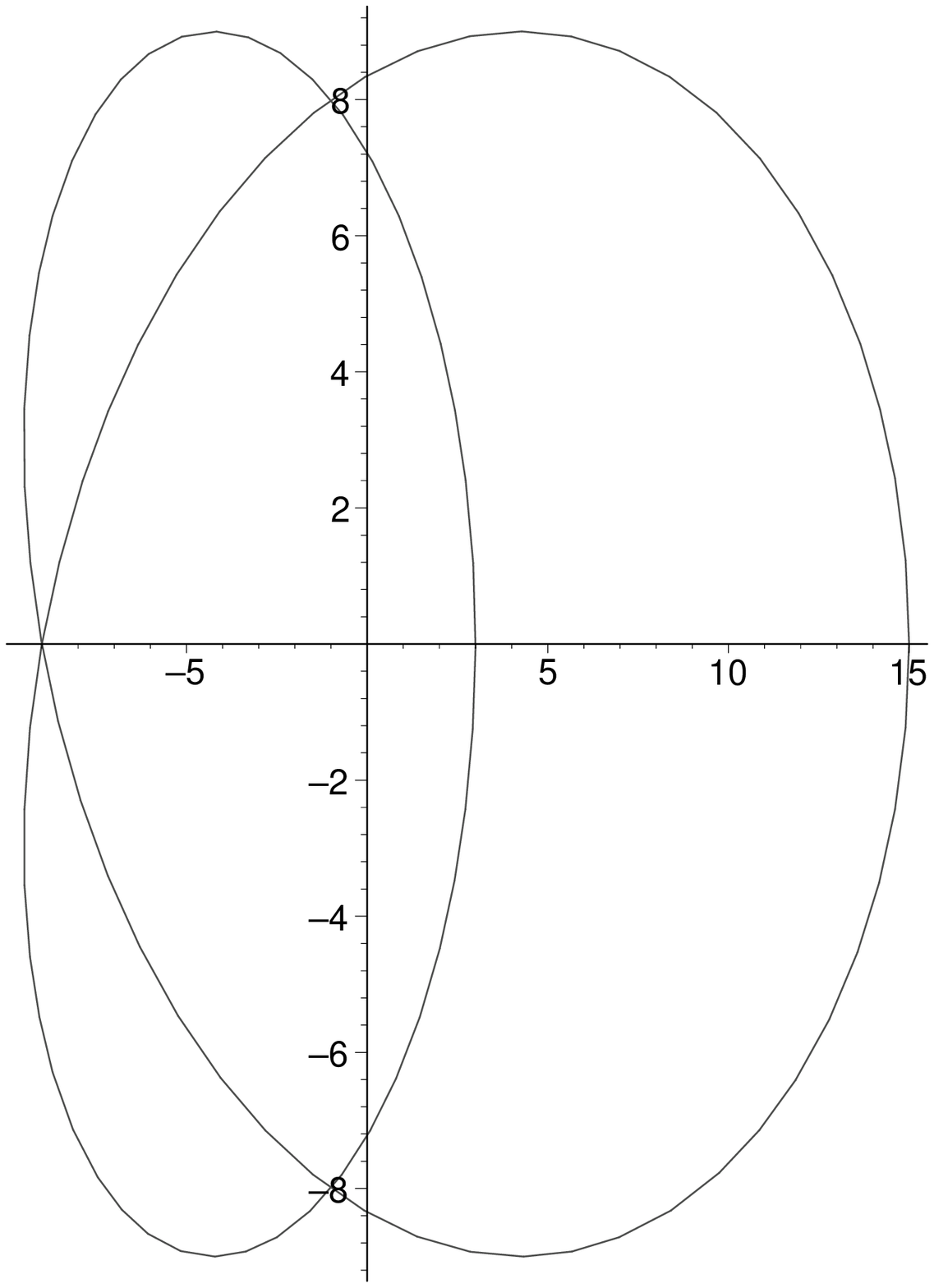}
\caption{\label{r3}$n=2,\,r=3$}
\end{figure}

\def\cprime{$'$} \def\cprime{$'$} \def\cprime{$'$} \def\cprime{$'$}
  \def\cprime{$'$} \def\cprime{$'$} \def\cprime{$'$} \def\cprime{$'$}

\end{document}